\documentclass[11pt,reqno]{amsart}

\usepackage[sort]{natbib}

\usepackage[T1]{fontenc}
\usepackage{enumitem}
\usepackage{hyperref}
\hypersetup{
	colorlinks   = true,
	citecolor    = blue,
	linkcolor    = blue
}

\usepackage{amsmath,amsfonts,amsthm,color,tikz, comment, xcolor, xfrac,amssymb}
\usepackage{algpseudocode} 
\usepackage{algorithm}
\usepackage{booktabs}



\usepackage[normalem]{ulem}
\usepackage{bbm}
\usepackage{mathtools}
\usepackage{shuffle}
\usepackage{pdfsync}
\usepackage[font={scriptsize}]{subcaption}
\renewcommand{\thesubfigure}{\Alph{subfigure}}
\usepackage{environ}

\usepackage[left=1in, right=1in, top=1.1in,bottom=1.1in]{geometry}
\newcommand{\der}{\delta}

\newcommand{\E}{\mathbb E}
\newcommand{\R}{\mathbb R}

\newcommand{\1}{\mathbbm 1}

\newcommand{\bp}{\mathbf{P}}

\newcommand{\cd}{\mathcal D}

\newcommand{\cf}{\mathcal F}

\newcommand{\ch}{\mathcal H}

\newcommand{\cm}{\mathcal M}

\newcommand{\al}{\alpha}

\newcommand{\ep}{\varepsilon}
\newcommand{\io}{\iota}
\newcommand{\ga}{\gamma}

\newcommand{\la}{\lambda}

\newcommand{\si}{\sigma}
\newcommand{\te}{\vartheta}

\newcommand{\lp}{\left(}
\newcommand{\rp}{\right)}

\newcommand{\lcl}{\left\{}
\newcommand{\rcl}{\right\}}

\newtheorem{theorem}{Theorem}[section]

\newtheorem{corollary}[theorem]{Corollary}

\newtheorem{definition}[theorem]{Definition}

\newtheorem{assumption}[theorem]{Assumption}
\newtheorem{lemma}[theorem]{Lemma}

\newtheorem{proposition}[theorem]{Proposition}

\theoremstyle{remark}
\newtheorem{remark}[theorem]{Remark}

\theoremstyle{remark}

\theoremstyle{definition}
\newtheorem{example}[theorem]{Example}

\newcommand{\bean}{\begin{eqnarray*}}
	\newcommand{\eean}{\end{eqnarray*}}
\newcommand{\ben}{\begin{enumerate}}
	\newcommand{\een}{\end{enumerate}}
\newcommand{\beq}{\begin{equation}}
	\newcommand{\eeq}{\end{equation}}

\title[Volterra-Skorokhod reflection and loss queues]{Volterra-Skorokhod Reflection and Fluid Limits of Loss Queues}

\author[MINGRUI WANG]{MINGRUI WANG}
\address{(M. Wang) Harold and Inge Marcus Department of Industrial and Manufacturing
Engineering, The Pennsylvania State University, University Park, PA 16802 United States\\
}
\email{mvw5822@psu.edu}

\author[PRAKASH CHAKRABORTY]{PRAKASH CHAKRABORTY}
\address{(P. Chakraborty) Harold and Inge Marcus Department of Industrial and Manufacturing
Engineering, The Pennsylvania State University, University Park, PA 16802 United States\\
}
\email{prakashc@psu.edu}
\thanks{P. Chakraborty is partially supported by the National Science Foundation under grant DMS-2153915 and Korb Early Career Professorship.}

\date{\today}

\begin{document}

\begin{abstract} 
In the $G_t/G/n/n$ loss queue, a customer blocked at time $u$ remains missing from the system for as long as it would have been served. A fraction $K(t-u)$ of the blocking at $u$ therefore still affects the occupancy at time $t$, where $K=1-G$ is the service survival function. We show that, for every arrival process satisfying a functional strong law of large numbers, the fluid limit of this queue is a reflection with kernel $K$ of the fluid limit of the associated infinite-server queue.
The reflection is defined by a one-sided reflection problem on $(-\infty,1]$ in which the regulator is convolved with a memory kernel. Given a c\`adl\`ag input $\rho$ and a kernel $K$ with $K(0)=1$, we seek a pair $(\hat\rho,\eta)$ such that $\hat\rho=\rho-\int_0^\cdot K(\cdot-u)\,d\eta(u)$ takes values in $(-\infty,1]$, $\eta$ is nondecreasing, and $\eta$ increases only when $\hat\rho=1$. We refer to this problem as the Volterra-Skorokhod problem. We show existence and uniqueness of the solution and that the solution map is Lipschitz continuous in the uniform norm and nonanticipating. It therefore applies to stochastic inputs path by path. Unlike the classical Skorokhod map, this map is not monotone. For nonincreasing kernels, however, the regulator is the smallest admissible one and monotone in the input. In the queue, this orders the cumulative fluid losses under ordered arrival rates. Furthermore, since arrivals enter the queue through the reflection kernel, the regulator is a thinning of the arrivals.
\end{abstract}

\maketitle

\section{Introduction}
The Skorokhod problem is the standard device for constraining a stochastic process to a half-line. In the form used here, given a c\`adl\`ag path $\psi$ with $\psi(0)\leq1$, one seeks $(\phi,\eta)$ such that $\phi=\psi-\eta$ takes values in $(-\infty,1]$, $\eta$ is nondecreasing with $\eta(0)=0$, and $\eta$ increases only at times when $\phi=1$. The solution is explicit, $\eta(t)=\sup_{s\leq t}(\psi(s)-1)^+$, and the induced reflection map $\Gamma\colon\psi\mapsto\phi$ is Lipschitz in the uniform norm \cite{skorokhod1961,tanaka1979,Kruk2007}.

Since Skorokhod's construction of diffusions with reflecting boundaries \cite{skorokhod1961}, this map has organized much of the theory of constrained processes: reflected Brownian motion \cite{harrison_reiman1981} and its role in heavy-traffic analysis, multidimensional and obliquely reflected diffusions analyzed through Lipschitz estimates and convex duality \cite{dupuis_ishii1991,dupuis_ramanan1999}, the doubly reflected map on $[0,a]$ with its explicit formula \cite{Kruk2007}, the extended Skorokhod map \cite{ramanan2006}, and the generalized drift Skorokhod problem \cite{reed_ward_zhan2013}. Continuity of $\Gamma$ in the Skorokhod $J_1$ and $M_1$ topologies underlies the continuous-mapping route to functional limit theorems \cite{whitt2001reflection,whitt2002}.

In each of these settings the regulator acts instantaneously. Regulation exerted at time $u$ lowers the state by the same amount at every later time. This is appropriate when the dynamics are memoryless, but capacity-constrained queues with general service are not. A customer admitted at time $u$ holds a server for a random duration, and hence the effect of the boundary regulation exerted at $u$ persists according to the service-time distribution. With $K=1-G$ the service-time survival function, a fraction $K(t-u)$ of the regulation at $u$ is still felt at time $t$, and the constraint is enforced by the convolution of $\eta$ with $K$.

This leads to the problem we study. Given an input $\rho\in\cd([0,\infty),\R)$ with $\rho(0)\leq1$ and a kernel $K$ with $K(0)=1$, the \emph{Volterra-Skorokhod problem with kernel $K$} asks for $(\hat\rho,\eta)$ with
\[
\hat\rho(t)=\rho(t)-\int_0^t K(t-u)\,d\eta(u)\leq1,\qquad t\geq0,
\]
$\eta$ nondecreasing with $\eta(0)=0$, and $\int_0^\infty\1_{\{\hat\rho(t)<1\}}\,d\eta(t)=0$. For $K\equiv1$ this is the classical Skorokhod reflection problem. For a general kernel the convolution couples the regulation across time, and the explicit sup-formula for the classical case no longer holds.

Reflection problems with Volterra dynamics were recently considered in \cite{ascione2025} for equations $z(t)=y(t)+\int_0^t k(t-s)b(s,z(s))\,ds+a(t)$, in which the kernel acts on the drift $b$ and the regulator $a$ enters additively. The constraint then remains instantaneous, the classical Skorokhod map applies pathwise, and existence follows by Picard iteration from its Lipschitz property. In our problem the regulator itself is convolved with the kernel. The additive term $a(t)$ is replaced by $-\int_0^t K(t-u)\,d\eta(u)$, and regulation at time $u$ keeps acting at all later times. The loss queue produces this memory, and it is the source of the analytical difficulty.

Our strategy is to reduce the Volterra-Skorokhod problem to the ordinary one. Let $\mu_K$ be the measure given by $\mu_K((s,t])=K(t)-K(s)$, and let $r$ be the resolvent of $-\mu_K$. We show that any solution $\hat\rho$ must be a fixed point of $\Gamma\circ\psi_\rho^K$, where $\psi_\rho^K$ is an affine map built from $r$, and conversely that this fixed point solves the problem. Combining the Lipschitz property of $\Gamma$ with the Bielecki weighted-norm renorming \cite{bielecki1956,Bessenyei2021} makes $\Gamma\circ\psi_\rho^K$ a contraction, from which existence, uniqueness, and Lipschitz continuity of the Volterra-Skorokhod map $\Gamma_K$ follow. The resolvent calculus we use is standard for Volterra equations with measure-valued kernels \cite{Gripenberg1990,Brunner2017}.

We summarize the contributions. Our main application concerns the $G_t/G/n/n$ loss queue. For every arrival process satisfying a functional strong law of large numbers (Assumption~\ref{asm:queue0}), its fluid limit is $\Gamma_K(\rho)$ with $K=1-G$, where $\rho$ is the fluid limit of the corresponding infinite-server queue (Theorem~\ref{thm:fluid}). The proof controls the error from replacing the data-dependent regulation kernel by $K$ through a uniform law of large numbers over a Vapnik-Chervonenkis class. The regulator of the limit is the fluid limit of the scaled number of blocked customers, and a larger arrival rate yields a larger cumulative loss (Corollary~\ref{cor:loss-order}). Because arrivals enter the queue through the reflection kernel, the regulator is a thinning of the arrivals with an admission fraction characterized by a free-boundary equation (Theorem~\ref{prop:thinning} and Corollary~\ref{thm:rho-sol}). 

These results rest on properties of $\Gamma_K$ that we establish for general kernels. We prove existence and uniqueness for the Volterra-Skorokhod problem, with a resolvent representation of the solution (Theorem~\ref{thm:V-S-sol}), and Lipschitz continuity of $\Gamma_K$ in the uniform norm (Proposition~\ref{prop:lip}). The convolution changes the order structure of the problem. Unlike the classical map, $\Gamma_K$ is not monotone in the input (Example~\ref{prop:non-mono}). For nonincreasing $K$ the regulator is nevertheless the smallest admissible one (Theorem~\ref{prop:eta-min}). It is therefore monotone in the input (Corollary~\ref{thm:eta-mono}), which gives a stochastic ordering of the loss (Corollary~\ref{cor:eta-st}).

Although $\Gamma_K$ is deterministic, it applies to random inputs path by path. Since it is nonanticipating, it maps adapted c\`adl\`ag inputs to adapted reflected processes, with moment bounds inherited from the input and continuous paths when the input and $K$ are continuous (Proposition~\ref{prop:reflected-process}).

The closest work is \cite{wang2025fluid}, which proves functional strong laws of large numbers for the $M_t/G/n/n$ queue and the finite-buffer $M_t/G/n/(n+b_n)$ queue, and characterizes the limits through Volterra integral equations with discontinuous coefficients. The present paper differs from it in three respects. We study the Volterra-Skorokhod map as a deterministic object for general kernels, apart from any queue, and prove its well-posedness, Lipschitz continuity, nonanticipation and order properties. Our fluid limit (Theorem~\ref{thm:fluid}) requires only a functional strong law of large numbers for the arrivals, instead of nonhomogeneous Poisson input, and it comes with the pathwise approximation $\|\bar N^n-\Gamma_K(\rho^n)\|_T\to0$. Finally, 
Corollary~\ref{thm:rho-sol} identifies the Volterra integral equation of \cite{wang2025fluid} with the Volterra-Skorokhod problem. Through this identification, the fraction $w$ of fluid arrivals admitted
exists by Theorem~\ref{thm:V-S-sol} instead of by the queueing limit, and Theorem~\ref{prop:thinning} extends the thinning representation to arrival measures with atoms. We do not treat finite buffers.

The paper is organized as follows. Section~\ref{sec:prelim} fixes notation and recalls the convolution of signed measures and the Bielecki norm. Section~\ref{sec:vsp} formulates the Volterra-Skorokhod problem, and proves unique solvability,  Lipschitz continuity and the order properties of $\Gamma_K$. Section~\ref{sec:queue} applies the map to the $G_t/G/n/n$ loss queue, and represents its fluid limit through a thinning of the arrivals. Section~\ref{sec:numeric} reports numerical experiments, and Section~\ref{sec:conclusion} concludes.

\section{Preliminaries}\label{sec:prelim}

\subsection{Function and measure spaces}
We denote by $\mathcal D([0,T],\mathbb R)$ the space of c\`adl\`ag (right continuous with left limits) functions from $[0,T]$ to $\mathbb R$. When no confusion is possible, we write simply $\mathcal D([0,T])$. We use $|\cdot|$ for the absolute value of real numbers. For $f\in\mathcal D([0,T])$, we denote the uniform norm
$$
\|f\|_T:=\sup_{t\in[0,T]} |f(t)|.
$$
For a Borel set $J\subseteq \R$, we denote by $M(J)$ the space of finite signed Borel measures on $J$. If $\mu\in M(J)$, then $|\mu|$ denotes its total variation measure. We denote by $M_{\mathrm{loc}}(J)$ the space of signed Borel measures $\mu$ on $J$ such that
$$
|\mu|(C)<\infty,\quad \text{ for every compact set } C\subseteq J.
$$
For $F\in\mathcal D([0,T])$ of bounded variation, we denote by $\mu_F$ the signed measure associated to $F$, which is determined by
\beq\label{eq:mu_F}
\mu_F(\{0\})=0,\quad\mu_F((s,t])=F(t)-F(s),\quad 0\leq s<t\leq T.
\eeq
When $F$ is of bounded variation, we will abuse the notation $dF:=d\mu_F$.

Unless stated otherwise, an integral $\int_a^b$ against a measure is taken over the closed interval $[a,b]$. By \eqref{eq:mu_F}, $\mu_F(\{0\})=0$ for every $F$ of bounded variation, and hence $\int_0^tg\,d\mu_F=\int_{(0,t]}g\,d\mu_F$. For such integrators our convention coincides with the convention $\int_{0+}^t$ customary for c\`adl\`ag integrators, and a nonzero value $F(0)$ never creates an atom at the origin.

\subsection{Convolution}\label{sec:conv}
For $f,g\in L^1_{\mathrm{loc}}(\R_+)$, we define their convolution by
$$
(f*g)(t):=\int_0^t f(t-s)g(s)ds.
$$
For $\mu\in M_{\mathrm{loc}}(\R_+)$ and for a locally bounded Borel function $f:\mathbb R_+\to\mathbb R$, we define their convolution by
$$
(\mu*f)(t)=(f*\mu)(t):=\int_{[0,t]} f(t-s)d\mu(s),\qquad t\geq 0.
$$
\begin{remark}
For more details about convolution of signed measures we refer the reader to \cite[Section 8.6]{folland1999real} and \cite[Chapters 3 and 4]{Gripenberg1990}.
\end{remark}
The following result follows from dominated convergence and will be used in the sequel.
\begin{lemma}\label{lem:conv-cadlag}
For $\mu\in M_{\mathrm{loc}}(\R_+)$ and $f\in\cd([0,T])$, we have $\mu*f\in\cd([0,T])$.
\end{lemma}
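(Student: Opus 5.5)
We have to show that $h:=\mu*f$, $h(t)=\int_{[0,t]}f(t-s)\,d\mu(s)$, is right continuous with left limits at every $t\in[0,T]$. The plan is to rewrite $h$ as an integral over a fixed domain and apply dominated convergence. Since $\mu\in M_{\mathrm{loc}}(\R_+)$, the restriction of $|\mu|$ to $[0,T]$ is finite. A càdlàg function on a compact interval is bounded, so $\|f\|_T<\infty$. Extend $f$ by zero to negative arguments, setting $\tilde f(x)=f(x)$ for $x\in[0,T]$ and $\tilde f(x)=0$ for $x<0$. Then
\[
h(t)=\int_{[0,T]}\tilde f(t-s)\,d\mu(s),\qquad t\in[0,T],
\]
and the integrand is bounded by $\|f\|_T$, which is $|\mu|$-integrable on $[0,T]$. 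This gives the dominating function for every limit below.

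\emph{Right continuity.} Fix $t<T$ and let $t_k\downarrow t$. For each fixed $s$, consider $\tilde f(t_k-s)$. If $s\le t$, then $t_k-s\downarrow t-s\ge0$, and right continuity of $f$ gives convergence to $f(t-s)=\tilde f(t-s)$. If $s>t$, then $t_k-s<0$ for all large $k$, so the integrand is eventually $0=\tilde f(t-s)$. Pointwise convergence holds for every $s$, and dominated convergence yields $h(t_k)\to h(t)$.

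\emph{Left limits.} Fix $t\in(0,T]$ and let $t_k\uparrow t$ with $t_k<t$. For $s<t$, eventually $t_k-s\ge0$ and $t_k-s\uparrow t-s>0$, so $\tilde f(t_k-s)\to f((t-s)-)$. For $s\ge t$, $t_k-s<0$ and the integrand is $0$. The pointwise limit $g(s):=f((t-s)-)\1_{\{s<t\}}$ therefore exists for every $s$, is bounded by $\|f\|_T$, and is Borel as a pointwise limit. Dominated convergence gives
\[
h(t_k)\to \int_{[0,t)}f((t-s)-)\,d\mu(s).
\]
This limit does not depend on the chosen sequence, so $h(t-)$ exists.

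The main care point is the indicator $\1_{[0,t]}(s)$, since its behaviour differs at $s=t$ for right and left approaches. The zero extension handles this. From the right, the point $s=t$ is included and contributes $f(0)\mu(\{t\})$. From the left, $s=t$ is excluded, so the left limit omits that atom; this is why $h$ may jump at atoms of $\mu$. Measurability of $s\mapsto\tilde f(t-s)$ follows because càdlàg functions are Borel.
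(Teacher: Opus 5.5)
Your proof is correct and is the argument the paper intends: the paper states only that the lemma follows from dominated convergence. Your zero extension of $f$ to negative arguments, with the constant $\|f\|_T$ as a $|\mu|$-integrable dominating function on $[0,T]$, supplies the details, including the handling of the point $s=t$ in the right and left limits.
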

For $\mu,\nu\in M_{\mathrm{loc}}(\R_+)$, their convolution $\mu*\nu \in M_{\mathrm{loc}}(\R_+)$ is defined on each finite interval as follows: for every $S<\infty$ and every
$E\in\mathcal B([0,S])$,
$$
(\mu*\nu)(E):=\int_{[0,S]}\int_{[0,S]}\mathbf \1_E(s+u)d\mu(s)d\nu(u).
$$

\subsection{Weighted norm}\label{sec:bielecki}
We briefly mention the Bielecki renorming technique \cite{bielecki1956}. Define the weighted norm for $\lambda>0$
\begin{equation*}
	\|f\|_{\lambda,T}:=\sup _{t \in[0, T]}\left(e^{-\lambda t}|f(t)|\right).
\end{equation*}
For any function $f \in \mathcal{D}([0, T])$, we have:
\begin{equation}\label{eq:w-norm-eq}
	e^{-\lambda T}\|f\|_T \leq\|f\|_{\lambda,T} \leq\|f\|_T.
\end{equation}
Hence $\|\cdot\|_{\lambda,T}$ and $\|\cdot\|_T$ are equivalent norms, and $(\cd([0,T]),\|\cdot\|_{\lambda,T})$ is complete \cite[Proposition 3]{Bessenyei2021}.

\begin{lemma}\label{lem:w-norm-bd}
Let $\mu\in M_{\mathrm{loc}}([0,T])$ and $f\in\cd([0,T])$. Then
\begin{equation}\label{eq:w-norm-bd}
\|\mu*f\|_{\lambda,T}\leq \|f\|_{\lambda,T} R_\mu(\lambda),
\end{equation}
where $R_\mu(\lambda)=\int_0^T e^{-\lambda s} d|\mu| (s)$. If, moreover $|\mu|(\{0\})=0$, then $\lim _{\lambda \rightarrow \infty} R_\mu(\lambda)=0$.
	\end{lemma}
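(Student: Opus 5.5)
The bound follows directly from the definition of convolution by inserting the weight. I fix $t\in[0,T]$ and write
\[
e^{-\lambda t}|(\mu*f)(t)|\le \int_{[0,t]} e^{-\lambda t}|f(t-s)|\,d|\mu|(s)=\int_{[0,t]} e^{-\lambda (t-s)}|f(t-s)|\,e^{-\lambda s}\,d|\mu|(s).
\]
The first factor satisfies $e^{-\lambda(t-s)}|f(t-s)|\le \|f\|_{\lambda,T}$, because $t-s\in[0,T]$. Bounding it by this constant gives
\[
e^{-\lambda t}|(\mu*f)(t)|\le \|f\|_{\lambda,T}\int_{[0,t]}e^{-\lambda s}\,d|\mu|(s)\le \|f\|_{\lambda,T}R_\mu(\lambda).
\]
The last step uses that the integrand is nonnegative and $[0,t]\subseteq[0,T]$. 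Taking the supremum over $t$ gives \eqref{eq:w-norm-bd}. The integral is well defined and finite because $f$ is càdlàg, hence bounded and Borel on $[0,T]$, and $|\mu|([0,T])<\infty$.

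For the limit I use dominated convergence on $[0,T]$ with respect to the finite measure $|\mu|$. The integrands $e^{-\lambda s}$ are dominated by $1$, which is $|\mu|$-integrable. As $\lambda\to\infty$ they converge pointwise to $\1_{\{0\}}(s)$. Therefore
\[
R_\mu(\lambda)\to|\mu|(\{0\})=0 .
\]
The only point needing care is that the convention $\int_0^T$ includes the endpoint $0$. This is exactly why the hypothesis $|\mu|(\{0\})=0$ is required: an atom at the origin would survive in the limit.
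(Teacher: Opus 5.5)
Your proposal is correct and follows the paper's proof essentially verbatim: you insert the weight $e^{-\lambda t}=e^{-\lambda(t-s)}e^{-\lambda s}$, bound $e^{-\lambda(t-s)}|f(t-s)|$ by $\|f\|_{\lambda,T}$, and take the supremum over $t$. For the limit you apply dominated convergence to $e^{-\lambda s}\to\1_{\{0\}}(s)$, which is the same as the paper's splitting $R_\mu(\lambda)=|\mu|(\{0\})+\int_{(0,T]}e^{-\lambda s}\,d|\mu|(s)$.
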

\begin{proof}
For any $f\in\cd([0,T])$ and $t\in[0,T]$, we have
\begin{equation*}
			e^{-\lambda t}|(\mu * f)(t)| \leq e^{-\lambda t} \int_0^t|f(t-s)| d|\mu| (s) =\int_0^t\left(e^{-\lambda(t-s)}|f(t-s)|\right) e^{-\lambda s}d|\mu| (s).
\end{equation*}
Since $e^{-\lambda(t-s)}|f(t-s)| \leq\|f\|_{\lambda,T}$, we obtain:
\begin{equation*}
			e^{-\lambda t}|(\mu * f)(t)| \leq\|f\|_{\lambda,T} \int_0^t e^{-\lambda s} d|\mu| (s) \leq\|f\|_{\lambda,T} R_\mu(\lambda).
\end{equation*}
Taking the supremum over $t\in[0,T]$ we get \eqref{eq:w-norm-bd}. Recall that $R_\mu(\lambda)=\int_0^T e^{-\lambda s} d|\mu| (s)=|\mu|(\{0\})+\int_{(0,T]}e^{-\lambda s} d|\mu| (s)$. Since $|\mu|$ is finite on compact intervals and $ e^{-\lambda s}\rightarrow 0$ as $\lambda\rightarrow\infty$, by dominated convergence we have $\lim _{\lambda \rightarrow \infty} R_\mu(\lambda)=0$ if $|\mu|(\{0\})=0$.
\end{proof}
\section{Volterra-Skorokhod problem}\label{sec:vsp}
Without loss of generality we present the reflection problem on $(-\infty, 1]$. 
The problem on $(-\infty,a]$ reduces to this case on replacing the input $\rho$ by $\rho+1-a$. We recall the classical Skorokhod problem.
\begin{definition}[Skorokhod Problem on \({(-\infty,1]}\)]\label{def:SP}
Given a function $\psi \in \mathcal{D}([0, \infty), \mathbb{R})$ with $\psi(0)\leq 1$, we say that a pair of functions $(\phi, \eta) \in \mathcal{D}^2([0, \infty), \mathbb{R})$ satisfies the Skorokhod problem for $\psi$ if the following four conditions are satisfied:
\begin{enumerate}[label=\arabic*.]
	\item $\phi(t)=\psi(t)-\eta(t)$ for $t \geq 0$,
	\item $\phi(t) \leq 1$ for $t \geq 0$,
	\item $\eta$ is nondecreasing with $\eta(0)=0$,
	\item $\int_0^{\infty} \1_{\{\phi(t)<1\}} d \eta(t)=0$.
\end{enumerate}
\end{definition}

For each $\psi \in \mathcal{D}([0, \infty), \mathbb{R})$ with $\psi(0)\leq 1$, the Skorokhod problem has a unique solution $(\phi, \eta)$ given by $\eta(t) = \sup_{s \leq t} (\psi(s)-1)^{+}$ \cite{skorokhod1961, Kruk2007}. We write $\Gamma(\psi)=\phi$ and call $\Gamma$ the Skorokhod map.

\begin{definition}[Volterra-Skorokhod problem with kernel $K$ on \({(-\infty,1]}\)]\label{def:VSP}
Given a function $\rho \in \mathcal{D}([0, \infty), \mathbb{R})$ with $\rho(0)\leq 1$ and a c\`adl\`ag function of bounded variation $K: [0,\infty)\rightarrow \R_+$ with $K(0)\neq 0$, we say that a pair of functions $(\hat{\rho}, \eta) \in \mathcal{D}^2([0, \infty), \mathbb{R})$ satisfies the Volterra-Skorokhod problem with kernel $K$ for $\rho$ if the following four conditions are satisfied:
\begin{enumerate}[label=\arabic*.]
	\item The reflected function $\hat{\rho}$ is obtained from $\rho$ by subtracting a Volterra integral of $\eta$:
    \beq\label{eq:VS-reg}
    \hat{\rho}(t)=\rho(t)-\int_0^t K(t-u) d \eta(u),
    \text{ for }t \geq 0,
    \eeq
	\item $\hat{\rho}(t) \leq 1$ for $t \geq 0$,
	\item $\eta$ is nondecreasing with $\eta(0)=0$,
	\item $\int_0^{\infty} \1_{\{\hat{\rho}(t)<1\}} d \eta(t)=0$.
\end{enumerate}
\end{definition}
\begin{remark}
The assumption that the kernel $K$ is nonnegative is not required for the following  unique solvability result of the problem. However, we retain this assumption because it matches the interpretation as a reflection. When $K$ is negative, past regulation would raise the state instead of lowering it.
\end{remark}

\begin{remark}\label{rk:time-zero}
By \eqref{eq:mu_F}, $d\eta$ has no atom at $0$, and Condition~1 gives $\hat\rho(0)=\rho(0)$. The regulator therefore cannot act at time $0$, and $\rho(0)\leq1$ is necessary for Condition~2. For general $\rho$ we allow an initial push, that is, $\eta(0)\geq0$ with $d\eta$ having the atom $\eta(0)$ at $0$. Conditions~2 and~4 at $t=0$ then force $\eta(0)=c:=(\rho(0)-1)^+/K(0)$. As $\int_0^t K(t-u)\,d\eta(u)=cK(t)+\int_0^tK(t-u)\,d(\eta-c)(u)$, the pair $(\hat\rho,\eta)$ solves the problem for $\rho$ if and only if $(\hat\rho,\eta-c)$ solves it for $\rho-cK$, and $(\rho-cK)(0)\leq1$. The results below therefore extend to general $\rho$
\end{remark}

Without loss of generality we assume $K(0) = 1$ and by Remark~\ref{rk:time-zero}, $\rho(0) \leq 1$ throughout, and these conditions are not repeated in the statements below. If $K(0)\neq 1$, we can replace $K$ by $\tilde{K}(t)=K(t)/K(0)$, in which case $(\hat{\rho},K(0)\eta)$ solves the Volterra-Skorokhod problem with kernel $\tilde{K}$ for $\rho$. 

\begin{theorem}\label{thm:V-S-sol}
Given any $\rho \in \mathcal{D}([0, T], \mathbb{R})$ and a 
kernel $K$ as in Definition~\ref{def:VSP} with $K(0)=1$, there exists a unique solution $(\hat{\rho}, \eta) \in \mathcal{D}^2([0, T], \mathbb{R})$ to the Volterra-Skorokhod problem in Definition~\ref{def:VSP}. Moreover, $\hat{\rho}$ is the unique fixed point of $\Gamma(\psi_{\rho}^{K}(\cdot))$ where
\begin{equation}\label{eq:psi_Thm}
	\psi_{\rho}^{K}(f)=r*\rho+\rho-r*f,\quad \text{for } f \in\mathcal{D}([0, T], \mathbb{R})
\end{equation}
Here, $r\in M_{\mathrm{loc}}([0,T])$ is the resolvent kernel of $-\mu_K$, that is, the solution to the Volterra integral equation with measure-valued kernel
\begin{equation}\label{eq:r_Thm}
	r=-\mu_K-r*\mu_K.
\end{equation}
We denote $\Gamma_K(\rho):=\hat{\rho}$ and call $\Gamma_K$ the Volterra-Skorokhod map with kernel $K$.
\end{theorem}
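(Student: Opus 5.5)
The approach is to reduce the Volterra-Skorokhod problem to the classical Skorokhod problem through the resolvent $r$, and then to solve the resulting fixed-point equation by a contraction argument in the Bielecki norm.

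\emph{Resolvent and reformulation.} Since $\mu_K(\{0\})=0$ by \eqref{eq:mu_F}, the measure $-\mu_K$ has no atom at the origin. Standard resolvent theory for measure kernels \cite{Gripenberg1990} then gives a unique $r\in M_{\mathrm{loc}}([0,T])$ solving \eqref{eq:r_Thm} with $r=-\mu_K-\mu_K*r$. One also expects $r(\{0\})=0$; this follows because an atom of $r$ at $0$ would have to be matched by an atom of $-\mu_K-r*\mu_K$ at $0$, which is zero.

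Because $K(0)=1$, we can write $K(t-u)=1+\mu_K((0,t-u])$. Hence
\[
\int_0^tK(t-u)\,d\eta(u)=\eta(t)+(\mu_K*\eta)(t)
\]
after a Fubini rearrangement, using that $\eta$ is càdlàg with $\eta(0)=0$. Condition~1 then reads
\[
\rho-\hat\rho=\eta+\mu_K*\eta.
\]
This is a Volterra equation for $\eta$ with kernel $\mu_K$. Convolving with $r$ and using the resolvent identity $r+\mu_K+r*\mu_K=0$ inverts it:
\[
\eta=(\rho-\hat\rho)+r*(\rho-\hat\rho).
\]
Therefore $\hat\rho=\psi^K_\rho(\hat\rho)-\eta$. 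The pair $(\hat\rho,\eta)$ then satisfies Conditions~1--4 of Definition~\ref{def:SP} with input $\psi=\psi^K_\rho(\hat\rho)$, since Conditions~2--4 are literally the same in both definitions. By uniqueness of the Skorokhod problem, $\hat\rho=\Gamma(\psi^K_\rho(\hat\rho))$.

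Conversely, let $f$ be a fixed point and let $\eta$ be the regulator of $\Gamma(\psi^K_\rho(f))$. Then $\eta=\rho-f+r*(\rho-f)$. Convolving with $\mu_K$ and using the resolvent equation the other way gives $\eta+\mu_K*\eta=\rho-f$, so Condition~1 holds with $\hat\rho=f$, while Conditions~2--4 are inherited from the Skorokhod problem. Two points need checking here: $\psi^K_\rho(f)(0)=\rho(0)\le1$, which holds since $r$ has no atom at $0$; and $\psi^K_\rho(f)$ is càdlàg, which follows from Lemma~\ref{lem:conv-cadlag}. Existence and uniqueness of the Volterra-Skorokhod solution is thus equivalent to existence and uniqueness of a fixed point of $\Gamma\circ\psi^K_\rho$.

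\emph{Contraction.} The classical map is Lipschitz: its regulator $\sup_{s\le t}(\psi(s)-1)^+$ is $1$-Lipschitz in $\|\cdot\|_t$ for each $t$, so $|\Gamma(\psi_1)(t)-\Gamma(\psi_2)(t)|\le 2\sup_{s\le t}|\psi_1(s)-\psi_2(s)|$. This bound is nonanticipating, and it therefore persists in the weighted norm:
\[
\|\Gamma(\psi_1)-\Gamma(\psi_2)\|_{\lambda,T}\le 2\|\psi_1-\psi_2\|_{\lambda,T},
\]
because $e^{-\lambda t}\sup_{s\le t}|g(s)|\le\sup_{s\le t}e^{-\lambda s}|g(s)|$. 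Moreover $\psi^K_\rho(f_1)-\psi^K_\rho(f_2)=-r*(f_1-f_2)$, so Lemma~\ref{lem:w-norm-bd} gives
\[
\|\Gamma\psi^K_\rho(f_1)-\Gamma\psi^K_\rho(f_2)\|_{\lambda,T}\le 2R_r(\lambda)\|f_1-f_2\|_{\lambda,T}.
\]
Since $|r|(\{0\})=0$, we have $R_r(\lambda)\to0$, so we can choose $\lambda$ with $2R_r(\lambda)<1$. Banach's theorem on the complete space $(\cd([0,T]),\|\cdot\|_{\lambda,T})$ then yields a unique fixed point, and hence the unique solution.

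\emph{Main obstacle.} The delicate step is the rigorous measure-theoretic inversion. This includes the Fubini identity $K*d\eta=\eta+\mu_K*\eta$ under the convention that integrals run over closed intervals, and the associativity $r*(\mu_K*\eta)=(r*\mu_K)*\eta$ for a measure acting on a càdlàg function. I would handle both by treating $\eta$ through $d\eta$ and using associativity of convolution of locally finite measures. I would also verify explicitly that $r$ has no atom at $0$, since both $\psi^K_\rho(f)(0)\le1$ and the contraction estimate depend on it.
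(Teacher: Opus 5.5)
Your proposal is correct and follows the paper's proof essentially step for step. You use the identity $K*d\eta=\eta+\mu_K*\eta$, invert through the resolvent $r$ (with $r(\{0\})=0$) to get $\hat\rho=\Gamma(\psi_\rho^K(\hat\rho))$, reverse the computation for existence, and close with a Bielecki-norm contraction of constant $2R_r(\lambda)\to0$; the only cosmetic difference is that you invert directly to $\eta=(\rho-\hat\rho)+r*(\rho-\hat\rho)$, whereas the paper passes through $r*\hat\rho=r*\rho+\mu_K*\eta$ and records that formula separately as Corollary~\ref{cor:eta-resolvent}.
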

\begin{proof}
The strategy is to convolve Condition~1 of Definition~\ref{def:VSP} with the resolvent $r$ of $-\mu_K$. This absorbs the memory term $\eta*\mu_K$ and exhibits $\hat\rho$ as the ordinary Skorokhod reflection of an explicit, $\hat\rho$-dependent input $\psi_\rho^K(\hat\rho)$. Then a Bielecki-norm (Section~\ref{sec:bielecki}) contraction provides the resulting fixed point. Reversing the computation yields existence. 

\emph{Step 1: Reduction to a Skorokhod fixed point.} Suppose $(\hat{\rho},\eta)$ solves the Volterra-Skorokhod problem for $\rho$. For $t\in[0,T]$, integration by parts of the Lebesgue-Stieltjes integral, using $K(0)=1$, gives
\begin{equation}\label{eq:ibp}
	\int_0^t K(t-u) d \eta(u)=\eta(t)+\int_0^t \eta(t-u) d \mu_K(u)=(\eta+\eta*\mu_K)(t),
\end{equation}
and Condition~1 of Definition~\ref{def:VSP} then gives
\begin{equation}\label{eq:hat_rho}
	\hat{\rho}=\rho-\eta-\eta*\mu_K.
\end{equation}
By \eqref{eq:r_Thm}, the resolvent $r$ satisfies
\begin{equation}\label{eq:r}
	r+r*\mu_K=-\mu_K,
\end{equation}
which exists, is unique, and is a signed measure of locally finite total variation by \cite[Thm 1.5, Chap 4]{Gripenberg1990}. Using $\eta * \mu_K = \mu_K * \eta$ (Section~\ref{sec:conv}) and the associativity $r*(\mu_K*\eta)=(r*\mu_K)*\eta$ \cite[Chapter 4, Theorem 1.2]{Gripenberg1990}, convolving \eqref{eq:hat_rho} by $r$ and applying \eqref{eq:r}, we have
\begin{equation}\label{eq:r*hat_rho}
	r*\hat{\rho}=r*\rho-(r+r*\mu_K)*\eta=r*\rho+\mu_K*\eta.
\end{equation}
Since \eqref{eq:hat_rho} gives $\eta*\mu_K=\rho-\eta-\hat{\rho}$, substituting into \eqref{eq:r*hat_rho} and rearranging,
\begin{equation}\label{eq:hat_rho_psi}
	\hat{\rho}=r*\rho+\rho-r*\hat{\rho}-\eta=\psi_{\rho}^{K}(\hat{\rho})-\eta,
\end{equation}
with $\psi_\rho^K$ as in \eqref{eq:psi_Thm}. As $\mu_K$ has no atom at $0$, we get $r(\{0\})=0$ by evaluating \eqref{eq:r} at $\{0\}$ and using $(r*\mu_K)(\{0\})=r(\{0\})\mu_K(\{0\})$. Hence $\psi_\rho^K(f)(0)=\rho(0)\leq1$ for every $f$. Therefore the reduced input inherits the admissible initial condition. Thus $(\hat\rho,\eta)$ solves the ordinary Skorokhod problem for $\psi_\rho^K(\hat\rho)$, and $\hat\rho$ is a fixed point of $\Gamma(\psi_\rho^K(\cdot))$.

\emph{Step 2: Uniqueness via contraction.} 
By the explicit formula for the classical Skorokhod regulator after Definition~\ref{def:SP}, for $\psi_1,\psi_2\in\cd([0,T])$ with $\psi_i(0)\leq1$ and $t\in[0,T]$,
\begin{equation*}
	|\Gamma(\psi_1)(t)-\Gamma(\psi_2)(t)|\leq|\psi_1(t)-\psi_2(t)|+\sup_{s\leq t}|\psi_1(s)-\psi_2(s)|.
\end{equation*}
Multiplying by $e^{-\lambda t}$ and using $e^{-\lambda t}\leq e^{-\lambda s}$ for $s\leq t$ gives $\|\Gamma(\psi_1)-\Gamma(\psi_2)\|_{\lambda,T}\leq L\|\psi_1-\psi_2\|_{\lambda,T}$ with $L=2$, for every $\lambda\geq0$. By Lemma~\ref{lem:conv-cadlag}, $\psi_\rho^K$ maps $\cd([0,T])$ into itself, and hence so does $\Gamma(\psi_\rho^K(\cdot))$.
Hence for $f_1,f_2\in\cd([0,T])$,
\begin{equation}\label{eq:contrac1}
	\|\Gamma(\psi_{\rho}^{K}(f_1))-\Gamma(\psi_{\rho}^{K}(f_2))\|_{\lambda,T}\leq L\|\psi_{\rho}^{K}(f_1)-\psi_{\rho}^{K}(f_2)\|_{\lambda,T}=L\|r*(f_1-f_2)\|_{\lambda,T}.
\end{equation}
Since $r(\{0\})=0$ and $r\in M_{\mathrm{loc}}([0,T])$, Lemma~\ref{lem:w-norm-bd} applied to \eqref{eq:contrac1} gives
\begin{equation*}
	\|\Gamma(\psi_{\rho}^{K}(f_1))-\Gamma(\psi_{\rho}^{K}(f_2))\|_{\lambda,T}\leq LR_r(\lambda)\|f_1-f_2\|_{\lambda,T},
\end{equation*}
where $\lim_{\lambda\rightarrow\infty}R_r(\lambda)=0$. Choosing $\lambda^*$ with $LR_r(\lambda^*)<1$ makes $\Gamma(\psi_\rho^K(\cdot))$ a contraction on the complete space $(\cd([0,T]),\|\cdot\|_{\lambda^*,T})$. Hence, by Banach's fixed point theorem $\hat\rho$ is its unique fixed point. Together with the unique solvability of the ordinary Skorokhod problem, $(\hat\rho,\eta)$ is the unique Volterra-Skorokhod solution.

\emph{Step 3: Existence.} Given $\rho$ and $K$, let $\hat\rho$ be the fixed point of $\Gamma(\psi_\rho^K(\cdot))$ from Step~2 and $\eta:=\psi_\rho^K(\hat\rho)-\hat\rho$ the associated Skorokhod regulator. Then \eqref{eq:hat_rho_psi} holds by construction. The computation of Step~1 is reversible: solving \eqref{eq:hat_rho_psi} for $\eta$ gives $\eta=(\rho-\hat\rho)+r*(\rho-\hat\rho)$, and convolving by $\mu_K$ with the help of  \eqref{eq:r} yields $\mu_K*\eta=-r*(\rho-\hat\rho)$. Adding these two equations returns \eqref{eq:hat_rho}. By the integration-by-parts identity~\eqref{eq:ibp}, $(\hat\rho,\eta)$ then satisfies Conditions~1 to 4 of Definition~\ref{def:VSP}, which proves existence.
\end{proof}
\begin{remark}\label{rem:fin-to-inf-horizon}
By uniqueness, the restriction to $[0,T]$ of the solution on $[0,T']$, $T<T'$, is the solution on $[0,T]$. The solutions are therefore consistent in $T$ and define the unique solution on $[0,\infty)$, which we also denote by $(\Gamma_K(\rho),\eta)$.
\end{remark}
\begin{corollary}\label{cor:eta-resolvent}
Under the hypotheses of Theorem~\ref{thm:V-S-sol}, the regulator $\eta$ and the reflected occupancy $\hat\rho$ are conjugate through the resolvent:
\begin{equation*}
	\rho-\hat\rho=\eta+\mu_K*\eta,\qquad \eta=(\rho-\hat\rho)+r*(\rho-\hat\rho).
\end{equation*}
Hence, given the resolvent $r$ of \eqref{eq:r_Thm}, each of $\eta$ and $\hat\rho$ determines the other by a bounded convolution. 
\end{corollary}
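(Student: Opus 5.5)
The first identity is a restatement of Condition~1 of Definition~\ref{def:VSP} in convolution form, so I would start there. By the integration-by-parts identity \eqref{eq:ibp}, which uses $K(0)=1$ and $\mu_K(\{0\})=0$, we have $\int_0^t K(t-u)\,d\eta(u)=(\eta+\eta*\mu_K)(t)$. Substituting this into \eqref{eq:VS-reg} gives \eqref{eq:hat_rho}, that is, $\rho-\hat\rho=\eta+\mu_K*\eta$. The convolution commutes here since $\eta*\mu_K=\mu_K*\eta$ by the definition in Section~\ref{sec:conv}.

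For the second identity I would reuse Step~1 of the proof of Theorem~\ref{thm:V-S-sol}. Write $g:=\rho-\hat\rho$, so that $g=\eta+\mu_K*\eta$. Convolving with $r$ and applying associativity \cite[Chapter 4, Theorem 1.2]{Gripenberg1990} gives $r*g=(r+r*\mu_K)*\eta$. By \eqref{eq:r} this equals $-\mu_K*\eta$. Adding $g$ to both sides then yields $g+r*g=\eta+\mu_K*\eta-\mu_K*\eta=\eta$, which is the claimed formula. This is the same algebra that produced \eqref{eq:hat_rho_psi}, since $\eta=\psi^K_\rho(\hat\rho)-\hat\rho$.

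For the final sentence, I would note that $|\mu_K|$ and $|r|$ are finite on $[0,T]$: the first because $K$ has bounded variation, the second by \cite[Thm 1.5, Chap 4]{Gripenberg1990}. Lemma~\ref{lem:w-norm-bd} with $\lambda=0$ then gives $\|\nu*f\|_T\le|\nu|([0,T])\|f\|_T$ for $\nu\in\{\mu_K,r\}$. By Lemma~\ref{lem:conv-cadlag}, both maps $f\mapsto f+\mu_K*f$ and $f\mapsto f+r*f$ are bounded linear operators on $\cd([0,T])$. Composing them in either order gives the identity, by the computation above and by \eqref{eq:r_Thm}. So each of $\eta$ and $\rho-\hat\rho$ determines the other, and since $\rho$ is given, so does $\hat\rho$.

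The only delicate point is justifying associativity and commutativity when measures are convolved with c\`adl\`ag functions rather than with measures. The Gripenberg theorem covers this for locally bounded Borel functions, so I expect no real obstacle.
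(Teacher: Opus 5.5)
Your proposal is correct and follows essentially the paper's route. The first identity is \eqref{eq:hat_rho}, obtained from \eqref{eq:ibp}, and the second is the resolvent algebra of Step~1, i.e.\ solving \eqref{eq:hat_rho_psi} for $\eta$. Your added justification of the ``bounded convolution'' claim via Lemma~\ref{lem:w-norm-bd} is the same argument the paper gives in Proposition~\ref{prop:reflected-process}(i).
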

\begin{proof}
The first identity is \eqref{eq:hat_rho}. Solving \eqref{eq:hat_rho_psi} for $\eta$ gives the second, $\eta=\psi_\rho^K(\hat\rho)-\hat\rho=(\rho-\hat\rho)+r*(\rho-\hat\rho)$.
\end{proof}
\begin{remark}\label{rem:ac-resolvent}
Suppose in addition that $K$ is absolutely continuous, and let $k:=K'$. Then $\mu_K(ds)=k(s)\,ds$ and $r(ds)=q(s)\,ds$, where $q\in L^1_{\mathrm{loc}}(\R_+)$ is the unique solution of $q=-k-q*k$ \cite[Chapter 2, Theorem 3.1]{Gripenberg1990}. Indeed, $q(s)\,ds$ solves \eqref{eq:r_Thm}, and the resolvent is unique. For $K=1-G$ with service density $g$, this equation reads $q=g+g*q$. Hence $q=\sum_{n\geq1}g^{*n}$ is the renewal density of the service distribution.
\end{remark}

\begin{remark}
Like the classical Skorokhod map, $\Gamma_K$ is idempotent. Indeed, $\Gamma_K(\rho)\leq1$ and $\Gamma_K(\rho)(0)=\rho(0)$. Hence $(\Gamma_K(\rho),0)$ solves the problem for the input $\Gamma_K(\rho)$, and uniqueness gives $\Gamma_K(\Gamma_K(\rho))=\Gamma_K(\rho)$.
\end{remark}

\begin{proposition}\label{prop:lip}
For a given kernel $K$, the Volterra-Skorokhod map $\Gamma_K$ is Lipschitz continuous. Specifically, there exists a constant $L_{K,T}>0$ such that for $\rho_1, \rho_2\in\mathcal{D}([0, T], \mathbb{R})$
\begin{equation*}
	\|\Gamma_K(\rho_1)-\Gamma_K(\rho_2)\|_T\leq L_{K,T}\|\rho_1-\rho_2\|_T.
\end{equation*}
\end{proposition}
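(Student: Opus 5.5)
The plan is to reuse the fixed-point characterization from Theorem~\ref{thm:V-S-sol} and run the same Bielecki-norm contraction estimate, now also tracking the dependence on the input. Write $\hat\rho_i=\Gamma_K(\rho_i)$ for $i=1,2$. By Theorem~\ref{thm:V-S-sol}, each $\hat\rho_i$ satisfies $\hat\rho_i=\Gamma(\psi_{\rho_i}^K(\hat\rho_i))$. The resolvent $r$ depends only on $K$, so it is common to both inputs. This is the point that makes the argument work: the affine maps $\psi_{\rho_i}^K$ differ only through their $\rho$-dependent part.

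\emph{Step 1: estimate in the weighted norm.} Fix $\lambda\geq0$, to be chosen later. From Step~2 of the proof of Theorem~\ref{thm:V-S-sol}, $\Gamma$ is $2$-Lipschitz in $\|\cdot\|_{\lambda,T}$, and both inputs satisfy $\psi_{\rho_i}^K(\hat\rho_i)(0)=\rho_i(0)\leq1$. Hence
\begin{equation*}
\|\hat\rho_1-\hat\rho_2\|_{\lambda,T}\leq 2\,\|\psi_{\rho_1}^K(\hat\rho_1)-\psi_{\rho_2}^K(\hat\rho_2)\|_{\lambda,T}.
\end{equation*}
By \eqref{eq:psi_Thm}, the difference inside the norm equals
\begin{equation*}
r*(\rho_1-\rho_2)+(\rho_1-\rho_2)-r*(\hat\rho_1-\hat\rho_2).
\end{equation*}
Applying Lemma~\ref{lem:w-norm-bd} to the two convolution terms gives
\begin{equation*}
\|\hat\rho_1-\hat\rho_2\|_{\lambda,T}\leq 2\bigl(1+R_r(\lambda)\bigr)\|\rho_1-\rho_2\|_{\lambda,T}+2R_r(\lambda)\|\hat\rho_1-\hat\rho_2\|_{\lambda,T}.
\end{equation*}

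\emph{Step 2: absorb the reflected term.} Since $r(\{0\})=0$, Lemma~\ref{lem:w-norm-bd} gives $R_r(\lambda)\to0$ as $\lambda\to\infty$. Choose $\lambda^*$ with $2R_r(\lambda^*)\leq 1/2$; this choice depends only on $K$ and $T$. The last term can then be moved to the left-hand side. This is legitimate because $\hat\rho_i\in\cd([0,T])$ is bounded, so $\|\hat\rho_1-\hat\rho_2\|_{\lambda^*,T}$ is finite. We obtain
\begin{equation*}
\|\hat\rho_1-\hat\rho_2\|_{\lambda^*,T}\leq 4\bigl(1+R_r(\lambda^*)\bigr)\|\rho_1-\rho_2\|_{\lambda^*,T}.
\end{equation*}

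\emph{Step 3: return to the uniform norm.} By \eqref{eq:w-norm-eq}, the left side is at least $e^{-\lambda^*T}\|\hat\rho_1-\hat\rho_2\|_T$, and the right-hand weighted norm is at most $\|\rho_1-\rho_2\|_T$. This yields the claim with
\begin{equation*}
L_{K,T}=4e^{\lambda^*T}\bigl(1+R_r(\lambda^*)\bigr).
\end{equation*}

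There is no real obstacle here. The only care needed is to check that $\lambda^*$ can be chosen independently of $\rho_1$ and $\rho_2$, which holds because $r$ depends on $K$ alone, and that the absorption step is justified by finiteness of the norms.
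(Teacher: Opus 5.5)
Your proposal is correct and follows the paper's proof. Both use the fixed-point identity $\hat\rho_i=\Gamma(\psi_{\rho_i}^K(\hat\rho_i))$, the weighted $2$-Lipschitz bound for $\Gamma$, Lemma~\ref{lem:w-norm-bd}, absorption once $2R_r(\lambda)\leq 1/2$, and \eqref{eq:w-norm-eq}, giving the same constant $4e^{\lambda T}(1+R_r(\lambda))$; your note that absorption needs the weighted norms to be finite is a point the paper leaves implicit.
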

\begin{proof}
Recall that $\psi_{\rho}^{K}$ and $r$ are defined as in \eqref{eq:psi_Thm}-\eqref{eq:r_Thm}.  
By Step~2 of the proof of Theorem~\ref{thm:V-S-sol}, $\Gamma$ is $L$-Lipschitz in $\|\cdot\|_{\lambda,T}$ for every $\lambda\geq0$. Hence,
by Theorem~\ref{thm:V-S-sol}, for $\rho_1,\rho_2\in\cd([0,T])$ we have
\begin{align}\label{eq:Gk1}
\|\Gamma_K(\rho_1)-\Gamma_K(\rho_2)&\|_{\lambda,T}=\|\Gamma(\psi_{\rho_1}^{K}(\Gamma_K(\rho_1)))-\Gamma(\psi_{\rho_2}^{K}(\Gamma_K(\rho_2)))\|_{\lambda,T}\nonumber\\
&\leq L\|\psi_{\rho_1}^{K}(\Gamma_K(\rho_1))-\psi_{\rho_2}^{K}(\Gamma_K(\rho_2))\|_{\lambda,T},
\end{align}
Expanding the right-hand side of \eqref{eq:Gk1} using the formula in relation~\eqref{eq:psi_Thm} we have
\begin{align}\label{eq:Gk2}
{\|\Gamma_K(\rho_1)-\Gamma_K(\rho_2) \|}_{\lambda, T} &\leq L\|r*(\rho_1-\rho_2)+(\rho_1-\rho_2)-r*(\Gamma_K(\rho_1)-\Gamma_K(\rho_2))\|_{\lambda,T}\nonumber\\
&\leq L\|r*(\rho_1-\rho_2)\|_{\lambda,T}+L\|\rho_1-\rho_2\|_{\lambda,T}+L\|r*(\Gamma_K(\rho_1)-\Gamma_K(\rho_2))\|_{\lambda,T}.
\end{align}
By applying Lemma~\ref{lem:w-norm-bd} to \eqref{eq:Gk2} we have
\begin{equation}\label{eq:Gk1-Gk2_2}
\|\Gamma_K(\rho_1)-\Gamma_K(\rho_2)\|_{\lambda,T}\leq L(1+R_r(\lambda))\|\rho_1-\rho_2\|_{\lambda,T}+LR_r(\lambda)\|\Gamma_K(\rho_1)-\Gamma_K(\rho_2)\|_{\lambda,T}.
\end{equation}
Since $\lim _{\lambda \rightarrow \infty} R_r(\lambda)=0$, we can choose a sufficiently large $\lambda_0>0$ such that $LR_r(\lambda_0)\leq1/2$. Therefore, by \eqref{eq:Gk1-Gk2_2} we have
\begin{equation*}
\|\Gamma_K(\rho_1)-\Gamma_K(\rho_2)\|_{\lambda_0,T}\leq L(1+R_r(\lambda_0))\|\rho_1-\rho_2\|_{\lambda_0,T}+\frac{1}{2}\|\Gamma_K(\rho_1)-\Gamma_K(\rho_2)\|_{\lambda_0,T}.
\end{equation*}
Consequently
\begin{equation}\label{eq:Lip-wt-norm}
	\|\Gamma_K(\rho_1)-\Gamma_K(\rho_2)\|_{\lambda_0,T}\leq 2L(1+R_r(\lambda_0))\|\rho_1-\rho_2\|_{\lambda_0,T}.
\end{equation}
By \eqref{eq:w-norm-eq} and \eqref{eq:Lip-wt-norm}, we get
\begin{align}\label{eq:Gk1-Gk2_3}
e^{-\lambda_0 T}\|\Gamma_K(\rho_1)-\Gamma_K(\rho_2)\|_T\leq\|\Gamma_K(\rho_1)-\Gamma_K(\rho_2)\|_{\lambda_0,T}&\leq 2L(1+R_r(\lambda_0))\|\rho_1-\rho_2\|_{\lambda_0,T}\nonumber\\
&\leq2L(1+R_r(\lambda_0))\|\rho_1-\rho_2\|_T.
\end{align}
Multiplying both sides of \eqref{eq:Gk1-Gk2_3} by $e^{\lambda_0 T}$ we get the desired result
\begin{equation*}
		\|\Gamma_K(\rho_1)-\Gamma_K(\rho_2)\|_T\leq L_{K,T}\|\rho_1-\rho_2\|_T.
\end{equation*}
Here 
$$L_{K,T}=2L(1+R_r(\lambda_0))e^{\lambda_0 T},$$
a constant which depends on $T$ and $r$, and hence on $K$.
\end{proof}
\begin{remark}
A natural question is whether the Lipschitz property also holds with respect to the $J_1$ and $M_1$ metrics. One direct way to deduce $J_1$-Lipschitz continuity from uniform Lipschitz continuity for a map $F$ is to verify its compatibility with time changes, namely
$$
F(x\circ \te)=F(x)\circ \te,
$$
where $\te$ is any strictly increasing continuous function of $[0,T]$ onto itself (cf. \cite[Corollary 1.6]{Kruk2007}). In the present setting, however, this compatibility generally fails for $\Gamma_K$ because the convolution term depends on the elapsed time $t-s$ through the kernel $K(t-s)$. Therefore, the $J_1$-Lipschitz property of $\Gamma_K$, if true, requires a separate argument. Since uniform Lipschitz continuity is sufficient for the purposes of this paper, we leave this question for future study.
\end{remark}
As a corollary to Proposition~\ref{prop:lip} we have that the limit of a sequence of kernel-reflected processes is the kernel-reflection of the limit.
\begin{corollary}\label{cor:cmt}
Suppose $\rho^n,\rho\in \cd([0,T])$ satisfy $\|\rho^n-\rho\|_T\rightarrow 0$. Then for any kernel $K$ as in Definition~\ref{def:VSP},
$$\|\Gamma_K(\rho^n)-\Gamma_K(\rho)\|_T\rightarrow 0.$$
\end{corollary}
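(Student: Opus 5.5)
The corollary follows directly from Proposition~\ref{prop:lip}, so the plan is simply to invoke the Lipschitz estimate with $\rho_1=\rho^n$ and $\rho_2=\rho$.

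First we check that the hypotheses of Proposition~\ref{prop:lip} hold. The kernel $K$ is as in Definition~\ref{def:VSP}, normalized to $K(0)=1$ as in the standing convention; otherwise we replace $K$ by $K/K(0)$ and $\eta$ by $K(0)\eta$, which leaves $\hat\rho$ unchanged. The inputs $\rho^n,\rho$ lie in $\cd([0,T])$. Initial values exceeding $1$ are handled by Remark~\ref{rk:time-zero}. So $\Gamma_K(\rho^n)$ and $\Gamma_K(\rho)$ are well defined by Theorem~\ref{thm:V-S-sol}.

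The key point is that the constant $L_{K,T}=2L(1+R_r(\lambda_0))e^{\lambda_0T}$ from the proof of Proposition~\ref{prop:lip} depends only on $K$ (through the resolvent $r$) and on $T$, and not on the inputs. Hence
\[
\|\Gamma_K(\rho^n)-\Gamma_K(\rho)\|_T\leq L_{K,T}\|\rho^n-\rho\|_T\longrightarrow0 .
\]

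There is no real obstacle. The only thing to verify is this uniformity of $L_{K,T}$ in the inputs, which is visible from its explicit form.
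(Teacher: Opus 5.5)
Your proposal is correct and is exactly the paper's argument: the corollary follows at once from the bound $\|\Gamma_K(\rho^n)-\Gamma_K(\rho)\|_T\leq L_{K,T}\|\rho^n-\rho\|_T$ of Proposition~\ref{prop:lip}, whose constant does not depend on the inputs. Your aside on initial values above $1$ is not needed under the standing convention $\rho(0)\leq1$, and Proposition~\ref{prop:lip} is only proved in that setting; extending it through Remark~\ref{rk:time-zero} would require the extra observation that $c=(\rho(0)-1)^+$ is $1$-Lipschitz in $\rho$, which gives the constant $L_{K,T}(1+\|K\|_T)$.
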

\begin{proof}
This is immediate from the Lipschitz continuity of $\Gamma_K$ (Proposition~\ref{prop:lip}), since $\|\Gamma_K(\rho^n)-\Gamma_K(\rho)\|_T\leq L_{K,T}\|\rho^n-\rho\|_T$.
\end{proof}

The next proposition records how $\Gamma_K$ acts on random inputs, together with the moments and regularity that a random input passes to its reflection.
 
\begin{proposition}\label{prop:reflected-process}
Fix a kernel $K$ as in Definition~\ref{def:VSP} and $T>0$. Then the following are true.
\begin{enumerate}[label=(\roman*), wide, labelindent = 0pt]
\item \emph{(Lipschitz continuity)} The maps $\rho\mapsto\Gamma_K(\rho)$ and $\rho\mapsto\eta$ are Lipschitz in ${\|\cdot\|}_T$.
\item \emph{(Nonanticipation)} For every $t\in[0,T]$, the restrictions $\Gamma_K(\rho)|_{[0,t]}$ and $\eta|_{[0,t]}$ depend on $\rho$ only through $\rho|_{[0,t]}$.
\item \emph{(Adaptedness)} If $\rho$ is an adapted c\`adl\`ag process on a filtered probability space $(\Omega,\cf,(\cf_t)_{t\in[0,T]},\bp)$, then $\hat\rho=\Gamma_K(\rho)$ and $\eta$ are adapted c\`adl\`ag processes, and $(\hat\rho,\eta)$ solves the Volterra-Skorokhod problem for $\rho$ path by path.
\item \emph{(Inheritance)} There is a constant $C_{K,T}$ with $\|\hat\rho\|_T\leq C_{K,T}\|\rho\|_T$ pathwise. Hence $\E\|\hat\rho\|_T^p\leq C_{K,T}^p \E\|\rho\|_T^p$ for every $p\geq1$. If, moreover $K$ is continuous and $\rho$ has continuous paths almost surely, then so do $\hat\rho$ and $\eta$.
\end{enumerate}
\end{proposition}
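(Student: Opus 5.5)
We prove the four items in order, using the resolvent representation of Theorem~\ref{thm:V-S-sol} and Corollary~\ref{cor:eta-resolvent} throughout.

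For (i), Lipschitz continuity of $\Gamma_K$ is Proposition~\ref{prop:lip}. For the regulator we use the second identity of Corollary~\ref{cor:eta-resolvent}, $\eta=(\rho-\hat\rho)+r*(\rho-\hat\rho)$. Writing $\eta_i$ for the regulator of $\rho_i$ and $\delta:=(\rho_1-\rho_2)-(\Gamma_K(\rho_1)-\Gamma_K(\rho_2))$, we get $\eta_1-\eta_2=\delta+r*\delta$. Since $\|r*f\|_T\leq |r|([0,T])\|f\|_T$ and $|r|([0,T])<\infty$, this yields
\[
\|\eta_1-\eta_2\|_T\leq(1+|r|([0,T]))(1+L_{K,T})\|\rho_1-\rho_2\|_T .
\]

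For (ii), we combine Remark~\ref{rem:fin-to-inf-horizon} with uniqueness. Suppose $\rho_1=\rho_2$ on $[0,t]$. The restriction of the solution for $\rho_i$ on $[0,T]$ to $[0,t]$ solves the Volterra-Skorokhod problem on $[0,t]$ for $\rho_i|_{[0,t]}$, because Conditions~1 to 4 at times $s\leq t$ involve only $\rho(s)$ and $\eta|_{[0,s]}$ (the integral is over $[0,s]$). By uniqueness on $[0,t]$ (Theorem~\ref{thm:V-S-sol} with horizon $t$), the two restrictions coincide.

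Item (iii) is the step I expect to need the most care, namely measurability. The path-by-path solution property is immediate from applying Theorem~\ref{thm:V-S-sol} to each $\omega$. For adaptedness, fix $t$. By (ii), $\hat\rho(t)=F_t(\rho|_{[0,t]})$, where $F_t$ is the map $\cd([0,t])\to\R$ given by $f\mapsto\Gamma_K(f)(t)$. By (i) this map is Lipschitz in $\|\cdot\|_t$. The delicate point is that the uniform topology on $\cd([0,t])$ is nonseparable, so its Borel $\sigma$-field is larger than the cylinder $\sigma$-field. I would bypass this by approximation. Take dyadic step approximations $\rho^{(m)}$ of $\rho$ built from the values at dyadic times, with finite maxima over intervals. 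These converge to $\rho$ only in a weaker sense, so a cleaner route is to use the Picard iterates instead. Set $f_0=\rho$ and $f_{j+1}=\Gamma(\psi^K_\rho(f_j))$. Each iterate is adapted, because the convolution $r*g$ of an adapted càdlàg $g$ is adapted by Fubini, and the classical Skorokhod map is given by the explicit supremum formula, which over a càdlàg path reduces to a countable supremum. The iterates converge uniformly on $[0,t]$ for each $\omega$ by the contraction in Step~2, hence pointwise, so $\hat\rho(t)$ is $\cf_t$-measurable. Then $\eta(t)=\psi^K_\rho(\hat\rho)(t)-\hat\rho(t)$ is adapted by the same argument. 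Both processes are càdlàg by Theorem~\ref{thm:V-S-sol}.

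For (iv), we take $\rho_2\equiv0$ in Proposition~\ref{prop:lip}. Its solution is $(0,0)$, since $0\leq1$ with no push, so $\|\hat\rho\|_T\leq L_{K,T}\|\rho\|_T$ and we may set $C_{K,T}=L_{K,T}$. Raising to the power $p$ and taking expectations gives the moment bound. For continuity, assume $K$ and $\rho$ are continuous. We run the Picard iteration in the closed subspace $C([0,T])$, which is complete under $\|\cdot\|_{\lambda,T}$. For continuous $f$, the function $r*f$ is continuous, and $\Gamma$ preserves continuity by the supremum formula. It remains to check that $r$ has no atoms, so that $r*f$ has no jumps. When $K$ is continuous, $\mu_K$ is atomless. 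From \eqref{eq:r_Thm}, the atoms of $r$ satisfy $r(\{s\})=-\mu_K(\{s\})-(r*\mu_K)(\{s\})$, and the convolution of a measure with an atomless measure is atomless. Hence $r$ is atomless, $r*f$ is continuous by dominated convergence, and the fixed point $\hat\rho$ is continuous. Finally, $\eta$ is continuous by the formula of Corollary~\ref{cor:eta-resolvent}.
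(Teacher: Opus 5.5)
Your proposal is correct. Items (i)--(iii) and the moment bound in (iv) follow the paper's proof essentially step for step:
\begin{itemize}
\item the resolvent identity of Corollary~\ref{cor:eta-resolvent}, together with boundedness of convolution by $r$;
\item restriction to $[0,t]$ plus uniqueness;
\item Picard iterates $f_{k+1}=\Gamma(\psi_\rho^K(f_k))$, shown adapted by Fubini;
\item $\Gamma_K(0)=0$ combined with Proposition~\ref{prop:lip}.
\end{itemize}

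The one step that genuinely differs is path continuity in (iv).

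The paper argues directly from the defining conditions. If $\Delta\eta(t)>0$, Condition~4 forces $\hat\rho(t)=1$. Condition~1, with $K(0)=1$ and $\rho$ and $K$ continuous, gives $\Delta\hat\rho(t)=-\Delta\eta(t)$. Hence $\hat\rho(t^-)=1+\Delta\eta(t)>1$, which contradicts Condition~2. Continuity of $\eta$ then makes $\int_0^\cdot K(\cdot-u)\,d\eta(u)$ continuous, and hence $\hat\rho$ as well.

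You instead use an invariance property of the contraction. You transfer atomlessness from $\mu_K$ to $r$ through $r=-\mu_K-r*\mu_K$, so $r*f$ is continuous whenever $f$ is. Since $\Gamma$ also preserves continuity, $f\mapsto\Gamma(\psi_\rho^K(f))$ maps the closed subspace $C([0,T])$ into itself, so its unique fixed point is continuous. Continuity of $\eta$ then follows from the resolvent identity. Every step checks out, including $(r*\mu_K)(\{s\})=\int\mu_K(\{s-u\})\,dr(u)=0$ and the dominated-convergence argument for $r*f$.

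Each route has its advantages. The paper's is shorter and needs no information about the resolvent. It is also local in time: the same computation shows that, for continuous $K$, $\eta$ can jump only at upward jumps of $\rho$, and by at most their size. It also makes the mechanism visible: with unit instantaneous weight, a jump of the regulator would overshoot the barrier. Yours needs the extra lemma on atoms of $r$ and requires global continuity of $\rho$. In exchange it is more structural: any nonempty closed set mapped into itself by $f\mapsto\Gamma(\psi_\rho^K(f))$ contains the solution. The same template therefore transfers other regularity properties without revisiting complementarity.
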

\begin{proof} 
(i) $\Gamma_K$ is Lipschitz by Proposition~\ref{prop:lip}. By Corollary~\ref{cor:eta-resolvent}, $\eta=(\rho-\hat\rho)+r*(\rho-\hat\rho)$. Convolution by the fixed locally finite measure $r$ is bounded on $(\cd([0,T]),\|\cdot\|_T)$ by \eqref{eq:w-norm-eq} and \eqref{eq:w-norm-bd}. Hence composing with the Lipschitz $\Gamma_K$ makes $\rho\mapsto\eta$ Lipschitz. 

(ii) 
By Remark~\ref{rem:fin-to-inf-horizon}, the restriction of $(\Gamma_K(\rho),\eta)$ to $[0,t]$ is the unique solution on $[0,t]$. Conditions~1 to 4 of Definition~\ref{def:VSP} on $[0,t]$ involve $\rho$ only through $\rho|_{[0,t]}$, which proves (ii).

(iii) 
Let $f_0:=\rho$ and $f_{k+1}:=\Gamma(\psi_\rho^K(f_k))$, pathwise. By Step~2 of the proof of Theorem~\ref{thm:V-S-sol}, $f_k\to\hat\rho$ uniformly on $[0,T]$ for every $\omega$. The classical Skorokhod map $\Gamma$ sends adapted c\`adl\`ag processes to adapted c\`adl\`ag processes. It therefore suffices to show that $\psi_\rho^K$ does the same. If $f$ is adapted and c\`adl\`ag, then $f$ and $\rho$ are progressively measurable, and Fubini's theorem shows that $(r*f)(t)=\int_0^t f(t-s)\,r(ds)$ and $(r*\rho)(t)$ are $\cf_t$-measurable. Hence $\psi_\rho^K(f)$ is adapted, and c\`adl\`ag by Lemma~\ref{lem:conv-cadlag}. By induction every $f_k$ is adapted, and hence so are $\hat\rho$ and $\eta=\psi_\rho^K(\hat\rho)-\hat\rho$. The identities of Definition~\ref{def:VSP} hold for every fixed $\omega$.

(iv) Since the reflection is one-sided, Condition 2 does not control $\hat\rho$ from below, and we use the Lipschitz bound instead. 
The pair $(0,0)$ solves the Volterra-Skorokhod problem for $\rho\equiv0$, and hence $\Gamma_K(0)=0$. By Proposition~\ref{prop:lip}, $\|\hat\rho\|_T=\|\Gamma_K(\rho)-\Gamma_K(0)\|_T\leq L_{K,T}\|\rho\|_T$. The moment bound follows on taking expectations. 

For continuity, fix $\omega$ with $\rho$ continuous and suppose $\Delta\eta(t)>0$ for some $t$. Thus the atom of $\eta$ at $t$ forces $\hat\rho(t)=1$ by the complementarity Condition~4. However Condition~1 with $K(0)=1$, continuity of $\rho$, and continuity of $K$ imply $\Delta\hat\rho(t)=-\Delta\eta(t)$, whence $\hat\rho(t^-)=1+\Delta\eta(t)>1$, contradicting Condition~2. Thus $\eta$ is continuous. Then $t\mapsto\int_0^tK(t-u)\,d\eta(u)$ is continuous ($\eta$ continuous, $K$ of bounded variation) as well and by Condition~1 so is $\hat\rho$.
\end{proof}

The next lemma shows that the resolvent is nonnegative when $K$ is nonincreasing.

\begin{lemma}\label{lem:r-positive}
Suppose, in addition to the hypotheses of Theorem~\ref{thm:V-S-sol}, that $K$ is nonincreasing. Then $\mu_K$ is a nonpositive measure and the resolvent $r$ of \eqref{eq:r_Thm} is the nonnegative measure
\begin{equation*}
	r=\sum_{n=1}^\infty(-\mu_K)^{*n}.
\end{equation*}
\end{lemma}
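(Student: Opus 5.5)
First, $\mu_K$ is nonpositive. By \eqref{eq:mu_F}, $\mu_K((s,t])=K(t)-K(s)\le 0$ for $s<t$ and $\mu_K(\{0\})=0$. Hence $\nu:=-\mu_K$ is a nonnegative measure, locally finite because $K$ has bounded variation on compacts, and $\nu(\{0\})=0$. The plan is to show that the Neumann series $S:=\sum_{n\ge1}\nu^{*n}$ converges to an element of $M_{\mathrm{loc}}([0,T])$ and solves \eqref{eq:r_Thm}. The uniqueness of the resolvent (\cite[Thm 1.5, Chap 4]{Gripenberg1990}, already invoked in Step~1 of Theorem~\ref{thm:V-S-sol}) then gives $r=S$. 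Since every $\nu^{*n}$ is a nonnegative measure, $r\geq0$ follows.

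\emph{Convergence.} Work on a fixed $[0,T]$ and use exponential weights, in the spirit of Lemma~\ref{lem:w-norm-bd}. For $\lambda>0$ define the weighted mass $\hat\nu(\lambda):=\int_{[0,T]}e^{-\lambda s}\,d\nu(s)$. By the definition of convolution in Section~\ref{sec:conv} and $e^{-\lambda(s+u)}=e^{-\lambda s}e^{-\lambda u}$, the weighted mass of $\nu^{*n}$ restricted to $[0,T]$ is at most $\hat\nu(\lambda)^n$; it is submultiplicative because only pairs with $s+u\le T$ contribute. Since $\nu(\{0\})=0$, dominated convergence (as in the proof of Lemma~\ref{lem:w-norm-bd}) gives $\hat\nu(\lambda)\to0$. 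Fix $\lambda_0$ with $\hat\nu(\lambda_0)\le 1/2$. Then
\[
\nu^{*n}([0,T])\le e^{\lambda_0T}\int_{[0,T]}e^{-\lambda_0 s}\,d\nu^{*n}(s)\le e^{\lambda_0 T}2^{-n}.
\]
So the series of nonnegative measures converges in total variation on $[0,T]$ to a finite nonnegative measure $S$. Letting $T$ vary, and using that the convolutions restricted to $[0,T]$ are consistent in $T$, gives $S\in M_{\mathrm{loc}}(\R_+)$.

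\emph{Resolvent equation.} Since convolution with the finite measure $\nu$ is continuous in total variation on $[0,T]$ (with $|\nu*\sigma|([0,T])\le \nu([0,T])\,|\sigma|([0,T])$) and bilinear, we can interchange the sum and the convolution:
\[
S*\nu=\sum_{n\ge1}\nu^{*(n+1)}=S-\nu .
\]
Rewriting with $\mu_K=-\nu$ gives $S=\nu+S*\nu=-\mu_K-S*\mu_K$, which is exactly \eqref{eq:r_Thm}. By uniqueness of the resolvent, $r=S=\sum_{n\ge1}(-\mu_K)^{*n}\ge0$.

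The main obstacle is the convergence step. A crude bound $\nu^{*n}([0,T])\le\nu([0,T])^n$ fails when $\nu([0,T])\ge1$, which can happen when $K$ drops sharply. The weighted-mass argument avoids this, but it hinges on $\nu(\{0\})=0$, which comes from the convention \eqref{eq:mu_F}. Once convergence holds, the remaining manipulations are routine.
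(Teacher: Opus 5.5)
Your proof is correct and takes the paper's route: the exponential weight with $\nu(\{0\})=0$ gives the geometric bound $\nu^{*n}([0,T])\le e^{\lambda_0 T}q^n$ with $q<1$, and uniqueness of the resolvent then identifies the series with $r$. Your explicit justification of the sum--convolution interchange, which yields $S=\nu+S*\nu$, fills in a step the paper only asserts. One small aside: since $K\geq0$, $\nu([0,T])=1-K(T)\le1$, so your crude bound fails only in the boundary case $K(T)=0$.
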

\begin{proof}
Write $a:=-\mu_K$. Since $K$ is nonincreasing, $\mu_K((s,t])=K(t)-K(s)\leq0$, and by definition $\mu_K(\{0\})=0$. Thus $a$ is a nonnegative measure with $a(\{0\})=0$ and $a((0,t])=1-K(t)$. The resolvent equation \eqref{eq:r_Thm} reads $r=a+a*r$, which is solved by the series $\sum_{n\geq1}a^{*n}$, where each $a^{*n}$ is nonnegative. 
To prove convergence on $[0,T]$, 
choose $\beta > 0$ with $q:= \int_{(0,T]} e^{-\beta s} a(ds) < 1$. This is possible by dominated convergence theorem since $a(\{0\})=0$.
On $\{s_1+\cdots+s_n\leq T\}$ we have $s_i \leq T$ for all $i$ and $1 \leq e^{\beta T} \prod_i e^{-\beta s_i}$.
Hence $a^{*n}([0,T])\leq e^{\beta T}  q^n$ and $\sum_{n\geq1}a^{*n}([0,T]) \leq e^{\beta T} q/(1-q)$.
Finally, the uniqueness of the resolvent \cite[Chapter 4, Theorem~1.5]{Gripenberg1990} identifies it with $r$.
\end{proof}
The Volterra-Skorokhod map is not monotone in general, as the following example shows.
\begin{example}\label{prop:non-mono}
There exists a kernel $K$, inputs $\rho_1\leq\rho_2$ in $\cd([0,\infty),\R)$, and a time $t$ such that $\Gamma_K(\rho_1)(t)>\Gamma_K(\rho_2)(t)$. 

Consider a toy deterministic queueing loss system with unit capacity and unit service times. Take the kernel to be $K=\1_{[0,1)}$, the survival function of the deterministic service time $1$. Let $\Lambda_1=\1_{[0.5,\infty)}$ and $\Lambda_2=\1_{[0.25,\infty)}+\1_{[0.5,\infty)}$ be two cumulative arrival functions. By \eqref{eq:mu_F}, $d\Lambda_1=\der_{0.5}$ and $d\Lambda_2=\der_{0.25}+\der_{0.5}$. Let $\rho_i(t)=\int_0^tK(t-u)\,d\Lambda_i(u)$ denote the corresponding unconstrained occupancy as a function of time.  This gives $\rho_1=\1_{[0.5,1.5)}$ and $\rho_2=\1_{[0.25,1.25)}+\1_{[0.5,1.5)}$. Hence $\rho_2-\rho_1=\1_{[0.25,1.25)}\geq0$.

We now derive the constrained occupancy. Since $\rho_1\leq1$, no reflection is needed, and $\eta_1 \equiv 0$ with $\Gamma_K(\rho_1)=\rho_1=\1_{[0.5,1.5)}$. For $\rho_2$, the pair $(\hat\rho_2,\eta_2)=(\1_{[0.25,1.25)},\1_{[0.5,\infty)})$ satisfies Definition~\ref{def:VSP}. Indeed, $\int_0^tK(t-u)\,d\eta_2(u)=K(t-0.5)\1_{\{t\geq0.5\}}=\1_{[0.5,1.5)}(t)$, and hence $\hat\rho_2=\rho_2-\1_{[0.5,1.5)}=\1_{[0.25,1.25)}\leq1$. Furthermore $\eta_2$ has a single atom at $0.5$, where $\hat\rho_2=1$, and thus the complementarity condition holds. By the uniqueness in Theorem~\ref{thm:V-S-sol}, $\Gamma_K(\rho_2)=\1_{[0.25,1.25)}$. Hence for $t\in[1.25,1.5)$ we have $\Gamma_K(\rho_1)(t)=1>0=\Gamma_K(\rho_2)(t)$, while $\rho_1\leq\rho_2$.
\end{example}

\begin{remark}
For the kernel $K=\1_{[0,1)}$ above, $\mu_K=-\delta_1$. Hence $a=-\mu_K=\delta_1$ and the resolvent of Lemma~\ref{lem:r-positive} is $r=\sum_{n\geq1}\delta_n\geq0$. Nonnegativity of the resolvent therefore does not imply monotonicity of $\Gamma_K$. 
Monotonicity fails because input blocked at the barrier leaves the system, whereas the same input offered to a less loaded system is admitted and occupies it over the full kernel support. Thus a smaller input can yield a larger reflected value at later times.
\end{remark}

The reflected process is thus not monotone. 
The regulator, however, is monotone for nonincreasing $K$, because it is then the smallest admissible regulator.
Let 
\beq\label{eq:S(rho)}
S(\rho):=\lcl \xi\in\cd([0,\infty),\R):\xi\text{ nondecreasing},\xi(0)=0,\int_0^tK(t-u)\,d\xi(u)\geq\rho(t)-1\text{ for all}\, t\geq 0 \rcl.
\eeq
denote the set of nondecreasing regulators starting at $0$ that constrain the process $\rho$ by \eqref{eq:VS-reg}, that is those satisfying Conditions~1 to 3 of Definition~\ref{def:VSP}. The following result proves that thet regulator $\eta$, that solves the Volterra-Skorokhod problem in~Definition~\ref{def:VSP} is the minimal element of $S(\rho)$. 
\begin{theorem}\label{prop:eta-min}
Let $K$ be as in Definition~\ref{def:VSP} and nonincreasing.
For $\rho\in\cd([0,\infty),\R)$, let $(\hat\rho,\eta)$ be the solution to the Volterra-Skorokhod problem of Definition~\ref{def:VSP}.
Then $\eta\in S(\rho)$, and $\eta(t)\leq\xi(t)$ for every $\xi\in S(\rho)$ and every $t\geq0$.    
\end{theorem}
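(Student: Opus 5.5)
First, $\eta\in S(\rho)$ follows directly from Conditions~1 to~3 of Definition~\ref{def:VSP}. Condition~3 gives that $\eta$ is nondecreasing with $\eta(0)=0$. Conditions~1 and~2 give $\rho(t)-\int_0^tK(t-u)\,d\eta(u)=\hat\rho(t)\leq1$.

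For minimality, fix $\xi\in S(\rho)$ and $T>0$. I would argue by contradiction using a first-crossing time. Suppose $\eta(t)>\xi(t)$ for some $t\le T$, and let
\[
\tau:=\inf\{t\geq0:\eta(t)>\xi(t)\}.
\]
Then $\eta\leq\xi$ on $[0,\tau)$. The key point is that $\eta$ can only increase at times $u$ with $\hat\rho(u)=1$. At such times, membership of $\xi$ in $S(\rho)$ gives
\[
\int_0^uK(u-v)\,d\xi(v)\geq\rho(u)-1=\int_0^uK(u-v)\,d\eta(v).
\]
I then want to convert this inequality between the convolutions into $\xi(u)\geq\eta(u)$ at the increase points of $\eta$ near $\tau$.

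To do this I use integration by parts \eqref{eq:ibp} with $K(0)=1$:
\[
\int_0^uK(u-v)\,d\zeta(v)=\zeta(u)+\int_{(0,u]}\zeta(u-s)\,d\mu_K(s)=\zeta(u)-\int_{(0,u]}\zeta(u-s)\,a(ds),
\]
where $a=-\mu_K\geq0$, valid for $\zeta=\xi-\eta$. Combining with the previous inequality, at any $u$ with $\hat\rho(u)=1$,
\[
(\xi-\eta)(u)\geq\int_{(0,u]}(\xi-\eta)(u-s)\,a(ds).
\]
Since $a$ has no atom at $0$, the right side involves values of $\xi-\eta$ at times strictly before $u$.

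Now take $u$ slightly beyond $\tau$ where $\eta$ has increased. If $\eta$ jumps at $\tau$, take $u=\tau$. Otherwise, let $\sigma:=\sup\{v\le u:\hat\rho(v)=1,\ v\in\operatorname{supp}d\eta\}$. Because $\eta$ is constant off the set $\{\hat\rho=1\}$, the relevant increase point can be taken where $\hat\rho=1$, using right continuity to handle closure issues. At that point the right side is $\int (\xi-\eta)(u-s)\,a(ds)$. For $s\geq u-\tau$ the integrand is $\geq0$. For $s\in(0,u-\tau)$ the integrand is $\geq -\sup_{[\tau,u]}(\eta-\xi)^+$, with $a$-mass $a((0,u-\tau))\to0$ as $u\downarrow\tau$. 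This gives
\[
(\eta-\xi)(u)\leq a((0,u-\tau))\,\sup_{[\tau,u]}(\eta-\xi)^+.
\]
Off the increase set of $\eta$, the difference $\eta-\xi$ is nonincreasing. Hence the supremum of $(\eta-\xi)^+$ over $[\tau,\tau+h]$ is attained (or approached) at increase points of $\eta$. This yields $M_h\leq a((0,h))M_h$ with $M_h:=\sup_{[\tau,\tau+h]}(\eta-\xi)^+$. For small $h$ we have $a((0,h))<1$, which forces $M_h=0$. This contradicts the definition of $\tau$ by right continuity.

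The main obstacle is this local step: making rigorous that the supremum of $\eta-\xi$ on $[\tau,\tau+h]$ is controlled at points where $\hat\rho=1$, including left limits and jump times. Jumps of $\eta$ are handled by evaluating at the jump time itself, where $\hat\rho=1$ by Condition~4. If this direct argument proves messy, the fallback is an iteration. Let $\theta:=\xi\wedge\eta$ built via the Skorokhod fixed point, and use the positivity of the resolvent (Lemma~\ref{lem:r-positive}) in Corollary~\ref{cor:eta-resolvent}. One would then show that the Picard iterates of Theorem~\ref{thm:V-S-sol}, started from $\xi$, stay below $\xi$ and converge to $\eta$.
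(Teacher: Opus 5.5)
Your proposal is correct and is essentially the paper's proof. It uses the same first-passage time for $\zeta=\xi-\eta$, the same renewal identity $\int_0^tK(t-u)\,d\zeta(u)=\zeta(t)-\int_0^t\zeta(t-s)\,a(ds)$ with $a=-\mu_K\geq0$, nonnegativity of the left side on $\{\hat\rho=1\}$, monotonicity of $\zeta$ off the contact set, and the self-bound $M_h\leq a((0,h])M_h$ on a short window; the paper's three-case analysis of the last contact time $\sigma$ is the rigorous form of your ``attained or approached at increase points'' step.

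One small repair is needed: put $s=u-\tau$ into the negative part, i.e. use $a((0,u-\tau])$ instead of $a((0,u-\tau))$. As written, your claim that the integrand is nonnegative for $s\geq u-\tau$ requires $\zeta(\tau)\geq0$, which the paper proves separately in its Step~3 and you do not.
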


\begin{proof}
Write $a:=-\mu_K$. As in the proof of Lemma~\ref{lem:r-positive}, $a$ is a nonnegative measure, and
\beq\label{eq:K-a}
a(\{0\})=0,\qquad a((0,\theta])=1-K(\theta),\qquad K(\theta)=1-a((0,\theta]),\qquad \theta\geq0.
\eeq
Note that the solution $\eta\in S(\rho)$. Indeed, from Definition~\ref{def:VSP}~Condition~1, we have 
$$
\int_0^tK(t-u)\,d\eta(u)=\rho(t)-\hat\rho(t) \geq\rho(t)-1,
$$ 
where the last inequality follows by Condition~2. Condition~3 supplies that $\eta$ is nondecreasing with $\eta(0)=0$. In particular we have $S(\rho)\neq\emptyset$.

Fix $\xi\in S(\rho)$ and set
\beq\label{eq:D+v}
\zeta:=\xi-\eta,\qquad v(t):=\int_0^tK(t-u)\,d\zeta(u).
\eeq
As a difference of nondecreasing c\`adl\`ag functions vanishing at the origin, $\zeta$ is c\`adl\`ag, of bounded variation on compacts, with $\zeta(0)=0$ and $\mu_\zeta(\{0\})=0$. We show that $\zeta\geq0$ in the following steps.

\noindent
\emph{Step 1: Renewal identity for $\zeta$.} 
Applying \eqref{eq:ibp} to $\xi$ and to $\eta$, and using $\mu_K=-a$, we obtain for every $t\geq0$
\beq\label{eq:ibp-D}
v(t)=\zeta(t)-\int_0^t \zeta(t-s)\,a(ds).
\eeq
Eq.~\eqref{eq:ibp-D} converts a statement about the convolved difference $v$ into one about the raw difference $\zeta$, at the expense of the memory term $\int_0^t \zeta(t-s)\,a(ds)$. However, the mass that $a$ can contribute over a window of length $h$, $a((0,h])=1-K(h)$, is small for small $h$. Below we use this fact to complete the proof in Step~5.

\noindent
\emph{Step 2: The two phases $\hat\rho =1$ and $\hat\rho < 1$.} By using Definition~\ref{def:VSP}~Condition~1 in \eqref{eq:D+v} we have for all $t \geq 0$
\begin{align}\label{eq:v-active}
v(t) &=  \int_0^t K(t-u)\,d\xi(u)- \int_0^t K(t-u)\,d\eta(u) \nonumber \\
&= \int_0^tK(t-u)\,d\xi(u)-\big(\rho(t)-\hat\rho(t)\big)\geq\hat\rho(t)-1,
\end{align}
where the last inequality follows from the feasibility of $\xi \in S(\rho)$.
By Definition~\ref{def:VSP}~Condition~2, the right side of \eqref{eq:v-active} is nonpositive, and it vanishes exactly on the contact set $B(\hat\rho) = \{t: \hat\rho(t)=1\}$. Therefore from \eqref{eq:v-active} we have
\beq\label{eq:v-contact}
v(s)\geq0\qquad\text{whenever }\hat\rho(s)=1.
\eeq
For times $t \not\in B(\hat\rho)$, the above provides no sign for $v(t)$. For this case, we use Definition~\ref{def:VSP}~Condition~4, which 
gives $\Delta\eta(t)=0$
whenever $\hat\rho(t)<1$, 
and $\eta(t) = \eta(\si)$ whenever $\hat{\rho} < 1$ on $(\si, t]$.
Hence
\beq\label{eq:D-frozen}
\hat\rho<1\text{ on }(\sigma,t]\ \Longrightarrow\ \zeta(t)-\zeta(\sigma)=\xi(t)-\xi(\sigma)\geq0.
\eeq
The proof alternates between \eqref{eq:v-contact} and \eqref{eq:D-frozen}.

\noindent
\emph{Step 3: First passage.} Let $T^*:=\inf\{t\geq0:\zeta(t)<0\}$ and suppose, for contradiction, that $T^*<\infty$. Then
\beq\label{eq:pre-T}
\zeta\geq0\ \text{ on }[0,T^*), \quad \text{and hence} \quad \zeta(T^{*-})\geq0\ \qquad \text{ if }T^*>0.
\eeq
We claim $\zeta(T^*)\geq0$. If $T^*=0$, this follows from $\zeta(0)=0$. Let now $T^*>0$. 

\noindent
(i) If $\hat\rho(T^*)=1$, we evaluate \eqref{eq:ibp-D} at $t=T^*$ to get
$$
\zeta(T^*)=v(T^*)+\int_0^{T^*} \zeta(T^*-s)\,a(ds)\geq0,
$$
where the inequality follows from the fact that $v(T^*) \geq 0$ when $\hat\rho(T^*) = 1$, $\zeta(T^* -s) \geq 0$ for $s \in (0, T^*)$ from \eqref{eq:pre-T}, $\zeta(0)=0$ for $s=T^{\ast}$, and $a$ is a nonnegative measure.

\noindent
(ii) If instead $\hat\rho(T^*)<1$, then $\Delta\eta(T^*)=0$ by Condition~4. Therefore $\zeta(T^*)=\zeta(T^{*-})+\Delta\xi(T^*)\geq0$ 
by \eqref{eq:pre-T} 
and monotonicity of $\xi$.

\noindent
\emph{Step 4: Bound on a short window.} By \eqref{eq:K-a} and right continuity of $K$ at the origin, $a((0,h])=1-K(h)\to0$ as $h\downarrow0$. Choose an $h>0$ such that
\beq\label{eq:h-choice}
a\big((0,h]\big)<1.
\eeq
Being c\`adl\`ag, $\zeta$ is bounded on the compact interval $[T^*,T^*+h]$. Therefore
\beq\label{eq:m}
m:=\min\Big(0,\inf_{[T^*,T^*+h]}\zeta\Big)\in(-\infty,0].
\eeq
We claim
\beq\label{eq:star}
\zeta(t)\geq m\,a\big((0,h]\big)\qquad\text{for every }t\in[T^*,T^*+h].
\eeq

Consider first a contact time $s\in[T^*,T^*+h]$ where $\hat\rho(s)=1$. Splitting the integral in \eqref{eq:ibp-D} we have
\beq\label{eq:v-int-split}
\int_0^s \zeta(s-r) a(dr) = \int_{(0, s-T^*]} \zeta(s-r)a(dr) + \int_{(s-T^*, s)} \zeta(s-r)a(dr).
\eeq
The points $r=0$ and $r=s$ are omitted because $a(\{0\})=0$ and $\zeta(0)=0$.
For $r\in(s-T^*,s)$ the argument $s-r$ lies in $(0,T^*)$, where $\zeta\geq0$ by \eqref{eq:pre-T}.
For $r\in(0,s-T^*]$ it lies in $[T^*,s)\subseteq[T^*,T^*+h]$, where $\zeta\geq m$ by the definition of $m$ in \eqref{eq:m}. Since $a\geq0$, discarding the second integral and bounding the first in \eqref{eq:v-int-split}, along with \eqref{eq:v-contact} gives
\beq\label{eq:active-bd}
\zeta(s)=v(s) + \int_0^s \zeta(s-r) \, a(dr) \geq m a \lp (0,s-T^*] \rp \geq m a \lp (0,h] \rp,
\eeq
where the last inequality holds from  $a((0,s-T^*])\leq a((0,h])$ and the fact that $m\leq0$. For $s=T^*$ the range $(0,s-T^*]$ is empty and \eqref{eq:active-bd} reduces to $\zeta(T^*)\geq0$, consistent with Step~3.

Now fix $t\in[T^*,T^*+h]$. If $\hat\rho(t)=1$, then \eqref{eq:active-bd} at $s=t$ is \eqref{eq:star}. If $\hat\rho(t)<1$, let
$$
\sigma:=\sup\{s\in[T^*,t]:\hat\rho(s)=1\},\qquad\text{with the convention }\sup\emptyset:=T^*,
$$
the last contact time at or before $t$. By Definition~\ref{def:VSP}~Condition~2 and the definition of $\sigma$ we have $\hat\rho<1$ on $(\sigma,t]$. Therefore $\zeta(t)\geq \zeta(\sigma)$ by \eqref{eq:D-frozen} and it suffices to bound $\zeta(\sigma)$. Consider three cases:
\ben[label=(\roman*)]
\item $\sigma=T^*$. Then by Step 3, $\zeta(\sigma)\geq0\geq m\,a((0,h])$ since $m\leq0$.
\item $\sigma>T^*$ and $\hat\rho(\sigma)=1$. Then \eqref{eq:active-bd} applies at $s=\sigma$.
\item $\sigma>T^*$ and $\hat\rho(\sigma)<1$. Here the supremum is not attained which may occur if $\hat\rho$ jumps down at $\sigma$ from the barrier. Choose $s_n\uparrow\sigma$ with $\hat\rho(s_n)=1$ such that each $s_n$ lies in $[T^*,T^*+h]$.  Thus \eqref{eq:active-bd} gives $\zeta(s_n)\geq m\,a((0,h])$, and letting $n\to\infty$ yields $\zeta(\sigma^-)\geq m\,a((0,h])$. By Condition~4, $\Delta\eta(\sigma)=0$, and hence $\zeta(\sigma)=\zeta(\sigma^-)+\Delta\xi(\sigma)\geq \zeta(\sigma^-)\geq m\,a((0,h])$.
\een
In every case $\zeta(t)\geq \zeta(\sigma)\geq m\,a((0,h])$. This proves \eqref{eq:star}.

\emph{Step 5: Contradiction.} Taking the infimum of \eqref{eq:star} over $[T^*,T^*+h]$ and using $m a((0,h])\leq0$, we have
\beq\label{eq:contra-1}
m=\min\Big(0,\inf_{[T^*,T^*+h]}\zeta\Big)\geq m\,a\big((0,h]\big),\qquad\text{i.e.}\qquad m\big(1-a((0,h])\big)\geq0.
\eeq
Since $1-a((0,h])$ is strictly positive by \eqref{eq:h-choice}, relation~\eqref{eq:contra-1} implies $m\geq0$. But $m\leq0$ from \eqref{eq:m} forces $m=0$, and therefore $\zeta\geq0$ on $[T^*,T^*+h]$. Combined with \eqref{eq:pre-T}, we have $\zeta\geq0$ on $[0,T^*+h]$. This contradicts the definition of $T^*$. Hence $T^*=\infty$, i.e. $\zeta(t)\geq0$ or equivalently $\eta(t)\leq\xi(t)$ for all $t \geq 0$.
\end{proof}

Minimality yields monotonicity of the regulator.
\begin{corollary}
\label{thm:eta-mono}
Under the hypotheses of Theorem~\ref{prop:eta-min}, let $\rho_1,\rho_2\in\cd([0,\infty),\R)$ with $\rho_1\leq\rho_2$ pointwise, and let $\eta_1,\eta_2$ be the corresponding regulators. Then $\eta_1(t)\leq\eta_2(t)$ for all $t\geq0$.
\end{corollary}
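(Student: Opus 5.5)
The plan is to deduce the corollary directly from the minimality statement in Theorem~\ref{prop:eta-min}, using that the admissible sets $S(\rho)$ defined in \eqref{eq:S(rho)} are ordered by inclusion in the reverse direction of the inputs. Concretely, I would show $S(\rho_2)\subseteq S(\rho_1)$ whenever $\rho_1\leq\rho_2$, and then apply minimality of $\eta_1$ in $S(\rho_1)$ to the particular element $\eta_2$.

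The first step is to check $\eta_2\in S(\rho_2)$. This is part of the conclusion of Theorem~\ref{prop:eta-min} applied to $\rho_2$, which is legitimate since $K$ is nonincreasing and $\rho_2\in\cd([0,\infty),\R)$ by hypothesis. Explicitly, $\eta_2$ is nondecreasing, c\`adl\`ag with $\eta_2(0)=0$, and
\[
\int_0^tK(t-u)\,d\eta_2(u)=\rho_2(t)-\hat\rho_2(t)\geq\rho_2(t)-1,\qquad t\geq0.
\]
The second step is the inclusion. Any $\xi\in S(\rho_2)$ satisfies $\int_0^tK(t-u)\,d\xi(u)\geq\rho_2(t)-1\geq\rho_1(t)-1$ for all $t$, because $\rho_1\leq\rho_2$ pointwise. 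The monotonicity, c\`adl\`ag and initial conditions in \eqref{eq:S(rho)} do not depend on $\rho$, so $\xi\in S(\rho_1)$. In particular $\eta_2\in S(\rho_1)$.

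The final step applies Theorem~\ref{prop:eta-min} to $\rho_1$: its regulator $\eta_1$ satisfies $\eta_1(t)\leq\xi(t)$ for every $\xi\in S(\rho_1)$ and every $t\geq0$. Taking $\xi=\eta_2$ gives $\eta_1\leq\eta_2$.

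There is no real obstacle here, since all the difficulty was absorbed into the minimality theorem. The only point to check is that the initial conditions $\rho_i(0)\leq1$ standing throughout the section are met, so that both regulators exist by Theorem~\ref{thm:V-S-sol} and Remark~\ref{rem:fin-to-inf-horizon}. This is part of the standing convention and needs no further argument.
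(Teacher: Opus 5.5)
Your proof is correct and is the paper's argument: it shows $S(\rho_2)\subseteq S(\rho_1)$ from $\rho_1\leq\rho_2$, notes $\eta_2\in S(\rho_2)$ by Theorem~\ref{prop:eta-min}, and concludes $\eta_1\leq\eta_2$ from the minimality of $\eta_1$ in $S(\rho_1)$. Your remark that the standing convention $\rho_i(0)\leq1$ ensures both regulators exist matches how the paper handles it.
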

\begin{proof}

Recall the notation $S(\rho)$ introduced in relation~\eqref{eq:S(rho)}. Let $\xi \in S(\rho_2)$. Then $\int_0^tK(t-u)\,d\xi(u)\geq\rho_2(t)-1$. Since $\rho_1 \leq \rho_2$, this implies
$\int_0^tK(t-u)\,d\xi(u)\geq\rho_1(t)-1$, and hence $\xi \in S(\rho_1)$. Thus $S(\rho_2)\subseteq S(\rho_1)$. By Theorem~\ref{prop:eta-min}, $\eta_2\in S(\rho_2)\subseteq S(\rho_1)$ whereas $\eta_1$ is the least element of $S(\rho_1)$. Hence $\eta_1\leq\eta_2$.
\end{proof}
The results above are pathwise. They transfer to stochastic process inputs through couplings.
\begin{corollary}\label{cor:eta-st}
Suppose that $K$ satisfies the hypotheses of Theorem~\ref{prop:eta-min}.
Let $\rho_1,\rho_2$ be $\cd([0,\infty),\R)$-valued stochastic processes, admitting a coupling with $\rho_1\leq\rho_2$ pointwise almost surely. Then their regulators admit a coupling with $\eta_1\leq\eta_2$ pointwise almost surely. In particular, $\eta_1(t)$ is stochastically smaller than $\eta_2(t)$ at every $t\geq0$, and $\E[\varphi(\eta_1)]\leq\E[\varphi(\eta_2)]$ for every bounded measurable $\varphi$ that is nondecreasing for the pointwise order.
\end{corollary}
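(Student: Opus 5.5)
Given a coupling $(\tilde\rho_1,\tilde\rho_2)$ on some probability space $(\tilde\Omega,\tilde\cf,\tilde\bp)$ with $\tilde\rho_i\overset{d}{=}\rho_i$ and $\tilde\rho_1\leq\tilde\rho_2$ pointwise almost surely, I will define the regulators on the same space, path by path, as $\tilde\eta_i:=\eta(\tilde\rho_i)$. Here $\eta(\cdot)$ denotes the regulator map $\rho\mapsto\eta$ of Theorem~\ref{thm:V-S-sol}, extended to $[0,\infty)$ via Remark~\ref{rem:fin-to-inf-horizon}. On the almost sure event where $\tilde\rho_1\leq\tilde\rho_2$, Corollary~\ref{thm:eta-mono} applies to the two deterministic paths and gives $\tilde\eta_1\leq\tilde\eta_2$ pointwise. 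Thus the pointwise ordering is inherited directly from the deterministic result.

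The step that needs care is showing that $(\tilde\eta_1,\tilde\eta_2)$ really is a coupling of the laws of $\eta_1,\eta_2$. For this I need $\tilde\eta_i\overset{d}{=}\eta_i$, which follows once the map $\rho\mapsto\eta$ is Borel measurable on $\cd([0,\infty),\R)$.
\begin{itemize}
\item I equip $\cd([0,\infty),\R)$ with the $\sigma$-field generated by the coordinate projections. This coincides with the Borel $\sigma$-field of $J_1$, and it is the natural $\sigma$-field for laws of process-valued random elements.
\item For fixed $t$ and $T\geq t$, the value $\eta(\rho)(t)$ depends only on $\rho|_{[0,T]}$, by Proposition~\ref{prop:reflected-process}(ii).
\item On $\cd([0,T])$ the restricted map is Lipschitz in $\|\cdot\|_T$, by Proposition~\ref{prop:reflected-process}(i).
\end{itemize}

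Lipschitz continuity in the uniform norm does not by itself give measurability for the projection $\sigma$-field, because the uniform topology on $\cd$ is nonseparable. To avoid this, I will use the Picard construction from the proof of Proposition~\ref{prop:reflected-process}(iii). Set $f_0=\rho$ and $f_{k+1}=\Gamma(\psi^K_\rho(f_k))$.
\begin{itemize}
\item Each coordinate $f_{k+1}(t)$ is a measurable functional of $\rho$. Indeed, $(r*f)(t)=\int f(t-s)\,r(ds)$ is measurable by Fubini, since $(\rho,s)\mapsto\rho(s)$ is jointly measurable on c\`adl\`ag paths. The classical Skorokhod map is built from suprema over rational times, which are also measurable.
\item The iterates converge pointwise, so $\hat\rho(t)$ is measurable in $\rho$.
\item Consequently $\eta(t)=(\rho-\hat\rho)(t)+r*(\rho-\hat\rho)(t)$ is measurable.
\end{itemize}
Hence $\eta(\cdot)$ is measurable, and $\tilde\eta_i=\eta(\tilde\rho_i)$ has the law of $\eta_i=\eta(\rho_i)$.

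The remaining claims follow at once. For the stochastic ordering at a fixed $t$, $\bp(\eta_1(t)>x)=\tilde\bp(\tilde\eta_1(t)>x)\leq\tilde\bp(\tilde\eta_2(t)>x)=\bp(\eta_2(t)>x)$ for every $x$. For bounded measurable $\varphi$ that is nondecreasing in the pointwise order, $\varphi(\tilde\eta_1)\leq\varphi(\tilde\eta_2)$ almost surely. Taking expectations and using equality of laws gives $\E[\varphi(\eta_1)]\leq\E[\varphi(\eta_2)]$.
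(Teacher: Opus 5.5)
Your proposal is correct and follows the paper's argument: apply Corollary~\ref{thm:eta-mono} path by path on the coupling space, where $\tilde\rho_1\leq\tilde\rho_2$ almost surely. Your extra step, showing via the Picard iterates that the regulator map is measurable for the projection $\sigma$-field so that $\tilde\eta_i$ has the law of $\eta_i$, supplies a detail the paper leaves implicit.
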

\begin{proof}
On the coupling, Corollary~\ref{thm:eta-mono} gives $\eta_1\leq\eta_2$ pointwise almost surely, and the stated consequences follow.
\end{proof}

\section{Fluid Limits of Loss Queues}\label{sec:queue}
\subsection{Model and fluid limit}\label{sec:queue-model}
\begin{assumption}\label{asm:queue0}
	Consider a $G_t / G / n / n$ loss queueing system, namely, a queueing system with
	\begin{enumerate}[ wide, labelindent = 0pt, label=\roman*.]
		\item an arrival process $A^n$ with $A^n(0)=0$ and arrival epochs $T_1\leq T_2\leq\cdots$, whose dependence on $n$ we suppress, satisfying a functional strong law of large numbers. More precisely, there exists a continuous nondecreasing function $\Lambda$ with $\Lambda(0)=0$, such that, for every $T > 0$,
		\begin{equation}\label{eq:flln-A}
			\lim_{n\rightarrow \infty}\left\|\bar{A}^n-\Lambda\right\|_T=0,\text{ a.s.}
		\end{equation}	
		where $\bar{A}^n=A^n/n$,
		\item general customer service times $\{S_i\}_{i\geq1}$, with $S_i$ the service time of the $i$-th arriving customer, sampled independently from a distribution $G$, with $G(0)=0$, independent of the arrival process $A^n$.
		\item the system has $n$ servers and zero buffer or waiting space. That is, when all $n$ servers are busy, incoming customer arrivals are lost. For simplicity the system is considered empty at time $0$.
		\item \emph{(no ties)} almost surely the arrival epochs $\{T_i\}$ are distinct and no service completion coincides with an arrival epoch: $T_i\neq T_j+S_j$ for all $i\neq j$.
	\end{enumerate}
All systems, $n\geq1$, are defined on a common probability space $(\Omega,\cf,\bp)$, and the service sequence $\{S_i\}$ does not depend on $n$.    
\end{assumption}

\begin{remark}
Assumption~\ref{asm:queue0}(iv) holds almost surely in the standard settings. Distinct arrival epochs ensure $A^n$ has jumps of size $1$. The condition $T_i\neq T_j+S_j$ holds almost surely if $G$ is continuous, or if the differences $T_i-T_j$, $i\neq j$, have atomless laws and the arrival process is independent of $\{S_i\}$ (e.g. Poisson or renewal arrivals with continuous interarrival times, even for deterministic service). 
Since the arrival process is independent of $S_j$, $\bp(T_i=T_j+S_j)=\int\bp(T_i-T_j=s)\,G(ds)$, and $\bp(T_i-T_j=s)>0$ for at most countably many $s$. Hence the condition can fail only if $G$ has an atom at some $s$ with $\bp(T_i-T_j=s)>0$, as for deterministic service combined with periodic arrivals.
The assumption removes the ambiguity of a customer that finds all servers busy at the instant one is vacated. Under this assumption, the blocking rule $\{\bar N^n_{T_i-}=1\}$ and the occupancy value $\bar N^n_{T_i}=1$ agree, and the complementarity condition of the Volterra-Skorokhod problem holds.
\end{remark}
Consider the $G_t/G/n/n$ loss system as in Assumption~\ref{asm:queue0}. 
Let $N_t^n$ denote the number of occupied servers, or equivalently the number of customers in the system at time $t$. Let $\bar{N}^n_t:= \frac{N_t^n}{n}$ be the fraction of occupied servers or the $n$-scaled number of customers in the system at time $t$. 
The fraction of occupied servers satisfies
\begin{equation*}
	\bar{N}_t^n=\frac{1}{n}\sum_{i=1}^{\infty}  \1_{\{T_{i} \leq t\} } \1_{\{\bar{N}_{T_{i}-}^{n}<1\}} \1_{\{S_i >t-T_{i}\}} .
\end{equation*}
For the corresponding $G_t/G/\infty$ queue without blocking, the scaled number of customers in the system is
\begin{equation*}
\rho_t^n=\frac{1}{n}\sum_{i=1}^{\infty}  \1_{\{T_{i} \leq t\} }\1_{\{S_i >t-T_{i}\}}. 
\end{equation*} 
By the functional law of large numbers for the $G_t/G/\infty$ queue \cite{whitt2002,pangwhitt2010,krichagina_puhalskii1997},
\begin{equation}\label{eq:ggn}
	\rho_t^n\rightarrow\rho_t:=\int_0^t(1-G(t-u))d\Lambda(u), \text{ a.s.}
\end{equation} 
in the uniform topology.

The concentration argument below treats the service times of the blocked customers as an i.i.d. sample, independent of the blocked arrival epochs. The next lemma justifies this. A blocked customer is lost on arrival and occupies no server, and hence its service time never influences which customers are blocked.

\begin{lemma}
\label{lem:blocked-marks}
Under Assumption~\ref{asm:queue0}, fix $T > 0$, let $B^n=\{i \leq A^n(T):\bar N^n_{T_i-}=1\}$ be the set of blocked customers by time $T$, and let $\iota_1^n < \cdots < \iota_{|B^n|}^n$ be its elements. Write $\tau^n_j:=T_{\io^n_j}$ and $\sigma^n_j:=S_{\io^n_j}$ for the arrival epoch and the service time of the $j$-th blocked customer. Then, conditionally on the arrival process $A^n$ and on $B^n$, the service times $(\sigma_j^n)_{j \leq |B^n|}$ are i.i.d. with distribution $G$. 
\end{lemma}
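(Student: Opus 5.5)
The argument conditions on the whole arrival sequence $(T_i)_{i\geq1}$ and uses that the blocking decisions are a deterministic function of the arrival epochs together with the service times of \emph{admitted} customers only. By Assumption~\ref{asm:queue0}(ii), $\{S_i\}$ is i.i.d.\ with law $G$ and independent of $A^n$, so conditionally on $A^n$ the $S_i$ are still i.i.d.\ $G$. Work on this conditional space and fix a deterministic realisation $t_1<t_2<\cdots$ of the epochs (distinct by Assumption~\ref{asm:queue0}(iv)).

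The first step is a sequential description of the system. Let $\beta_i\in\{0,1\}$ indicate that customer $i$ is blocked. Then
\[
\beta_i=\1\Big\{\sum_{j<i}(1-\beta_j)\1_{\{S_j>t_i-t_j\}}\geq n\Big\},
\]
since $N^n_{t_i-}$ counts the earlier admitted customers still in service; the no-ties condition means the strict inequality $S_j>t_i-t_j$ is unambiguous. By induction on $i$, $\beta_i$ is a measurable function $h_i$ of the vector $\big((1-\beta_j)S_j\big)_{j<i}$. In particular, $\beta_i$ depends on $S_j$ only through admitted customers $j<i$. It follows that for any fixed finite set $I\subseteq\{1,\dots,m\}$, where $m=A^n(T)$, the event $\{B^n=I\}$ is measurable with respect to $\sigma(S_j:j\in\{1,\dots,m\}\setminus I)$. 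Indeed, on $\{B^n=I\}$ one has $(1-\beta_j)S_j=\1_{\{j\notin I\}}S_j$, so the event equals $\bigcap_{i\le m}\{h_i((\1_{\{j\notin I\}}S_j)_{j<i})=\1_{\{i\in I\}}\}$. This consistency rewriting is the one delicate point. The naive statement ``$B^n$ does not depend on blocked service times'' must be converted into an identity of events in which blocked indices are fixed in advance, so that no circularity remains.

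Granting this, the conclusion is immediate. For Borel sets $C_1,\dots,C_{|I|}$, with $I=\{\iota_1<\dots<\iota_k\}$,
\[
\bp\big(\sigma_j\in C_j\ \forall j,\ B^n=I\mid A^n\big)=\bp\big(S_{\iota_j}\in C_j\ \forall j\mid A^n\big)\,\bp\big(B^n=I\mid A^n\big)=\prod_{j}G(C_j)\,\bp(B^n=I\mid A^n).
\]
The first equality uses the independence of $(S_i)_{i\in I}$ from $(S_i)_{i\notin I}$ given $A^n$, and the second uses the i.i.d.\ property. Dividing by $\bp(B^n=I\mid A^n)$ on the event where it is positive gives the claimed conditional i.i.d.\ $G$ law. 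Since $m=A^n(T)$ is $A^n$-measurable and finite, summing over the countably many pairs $(m,I)$ handles the random index set. A formal version would use regular conditional probabilities given $A^n$, which exist because the $S_i$ are real-valued. This finishes the argument.
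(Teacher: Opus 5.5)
Your proposal is correct and follows essentially the same route as the paper. Both arguments rewrite $\{B^n=I\}$ (the paper's $\{\beta=b\}=\{\beta^b=b\}$) as an event in $\sigma(A^n,S_k:k\notin I)$, then use that the blocked marks $(S_k)_{k\in I}$ are i.i.d.\ $G$ and independent of it given $A^n$, and finally sum over the index sets; the only point you leave implicit is the reverse inclusion in your event identity, which follows by the same induction on $i$ the paper invokes.
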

\begin{proof}
Let $\beta_i=\1_{\{i\in B^n\}}$. By Assumption~\ref{asm:queue0}(iv), $T_1 < T_2< \cdots$ almost surely and no departure occurs at an arrival epoch. Hence the occupancy just before the $i$-th arrival is
\beq\label{eq:block-rule}
	\bar N^n_{T_i-}=\frac1n\sum_{k<i,\,\beta_k=0}\1_{\{S_k>T_i-T_k\}}.
\eeq
Thus whether customer $i$ is blocked is determined by the arrival process and the service times of previously admitted customers. 
We condition on $A^n$ and let $m:=A^n(T)$. For $b\in\{0,1\}^m$, define $\beta^b_1,\ldots,\beta^b_m$ by \eqref{eq:block-rule} with the sum restricted to the indices $k<i$ with $b_k=0$, that is,
$$
\beta^b_i:=\1_{\{\frac1n\sum_{k<i,\,b_k=0}\1_{\{S_k>T_i-T_k\}}=1\}}.
$$
By induction on $i$, $\{\beta=b\}=\{\beta^b=b\}$, and the latter event lies in $\si(A^n,S_k:b_k=0)$. Given $A^n$, the marks $(S_k)_{k:\,b_k=1}$ are i.i.d. with distribution $G$ and independent of $\si(A^n,S_k:b_k=0)$. Hence, on $\{\beta=b\}=\{B^n=\{k\leq m:b_k=1\}\}$, the blocked service times are i.i.d. with distribution $G$ given $A^n$ and $B^n$. Summing over $b$ proves the lemma. This is a form of the optional skipping of \cite{doob1936note}.
\end{proof}
\begin{theorem}\label{thm:fluid}
Suppose that Assumption~\ref{asm:queue0} holds. Fix $T>0$, and let $K=1-G$. Then $\|\bar{N}^n-\Gamma_K(\rho^n)\|_T\rightarrow 0$ a.s. Furthermore, $\|\bar{N}^n-\Gamma_K(\rho)\|_T\rightarrow 0$ a.s.
\end{theorem}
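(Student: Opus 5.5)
We will write the finite-$n$ dynamics as a perturbed Volterra-Skorokhod problem and then use the Lipschitz continuity of $\Gamma_K$ (Proposition~\ref{prop:lip}). Let $\eta^n(t):=\frac1n\sum_{i}\1_{\{T_i\le t\}}\1_{\{\bar N^n_{T_i-}=1\}}$ be the scaled number of blocked customers by time $t$. Subtracting the two sums defining $\bar N^n$ and $\rho^n$ gives the exact identity
\[
\bar N^n_t=\rho^n_t-\frac1n\sum_{j\le |B^n_t|}\1_{\{\sigma^n_j>t-\tau^n_j\}},
\]
where the sum runs over blocked customers with $\tau^n_j\le t$. We compare this with the ideal regulation term $\int_0^tK(t-u)\,d\eta^n(u)=\frac1n\sum_j K(t-\tau^n_j)$, and define the error
\[
\epsilon^n_t:=\frac1n\sum_{j:\tau^n_j\le t}\bigl(\1_{\{\sigma^n_j>t-\tau^n_j\}}-K(t-\tau^n_j)\bigr).
\]
Then $\bar N^n=(\rho^n-\epsilon^n)-\int_0^\cdot K(\cdot-u)\,d\eta^n(u)$. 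Moreover $\bar N^n\le1$, and $\eta^n$ is nondecreasing with $\eta^n(0)=0$. By Assumption~\ref{asm:queue0}(iv), $\eta^n$ increases only at blocking epochs $T_i$, where $\bar N^n_{T_i-}=1$. Since no departure occurs at $T_i$ and the blocked customer is not admitted, $\bar N^n_{T_i}=1$ as well. Hence $(\bar N^n,\eta^n)$ solves the Volterra-Skorokhod problem for the input $\rho^n-\epsilon^n$, and uniqueness in Theorem~\ref{thm:V-S-sol} gives $\bar N^n=\Gamma_K(\rho^n-\epsilon^n)$.

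Proposition~\ref{prop:lip} then yields $\|\bar N^n-\Gamma_K(\rho^n)\|_T\le L_{K,T}\|\epsilon^n\|_T$. The first claim therefore reduces to $\|\epsilon^n\|_T\to0$ a.s. The second claim follows from the first together with \eqref{eq:ggn} and Corollary~\ref{cor:cmt}.

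The main step, and the expected obstacle, is $\|\epsilon^n\|_T\to0$ a.s. By Lemma~\ref{lem:blocked-marks}, conditionally on $A^n$ and $B^n$, the marks $\sigma^n_j$ are i.i.d.\ $G$ and independent of the epochs $\tau^n_j$. The number of blocked customers is at most $A^n(T)$, and $A^n(T)\le Cn$ eventually a.s.\ by \eqref{eq:flln-A}. We bound
\[
\|\epsilon^n\|_T\le \frac{|B^n|}{n}\sup_{t\le T}\Bigl|\frac{1}{|B^n|}\sum_j \bigl(\1_{\{\sigma_j^n>t-\tau_j^n\}}-K(t-\tau_j^n)\bigr)\1_{\{\tau^n_j\le t\}}\Bigr|.
\]
The functions $(\tau,\sigma)\mapsto\1_{\{\tau\le t<\tau+\sigma\}}$, indexed by $t$, form a VC class, being intersections of two half-spaces. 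Conditionally on the epochs, the summands are independent and centred. A Vapnik-Chervonenkis exponential inequality, applied to this non-identically distributed triangular array (or symmetrization plus Hoeffding over the at most polynomially many distinct patterns on the $|B^n|$ points), gives a bound of the form
\[
\bp\bigl(\sup_t|\cdot|>\varepsilon\,\big|\,A^n,B^n\bigr)\le c(|B^n|+1)^{d}e^{-c'|B^n|\varepsilon^2}.
\]
Applying this with $\sum_j(\cdots)$ normalized by $n$ instead of $|B^n|$ handles the case of small $|B^n|$ uniformly: Hoeffding gives $e^{-2n^2\varepsilon^2/|B^n|}\le e^{-2n\varepsilon^2/C}$, and the polynomial factor is $\le (Cn+1)^d$. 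This makes the probabilities summable in $n$. Borel-Cantelli then gives $\|\epsilon^n\|_T\to0$ a.s., using that all $n$ live on a common space.

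The delicate points I anticipate are two. First, conditioning on $B^n$ must be done carefully, since $B^n$ and the epochs are random and depend on $n$; Lemma~\ref{lem:blocked-marks} is exactly what permits the conditional i.i.d.\ treatment. Second, we must handle the event $A^n(T)>Cn$, which occurs only finitely often a.s.
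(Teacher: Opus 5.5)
Your proposal is correct and follows the paper's proof: you rewrite $\bar N^n$ as the Volterra-Skorokhod solution for the input $\rho^n-\epsilon^n$, apply the Lipschitz bound of Proposition~\ref{prop:lip}, and run a conditional VC/Borel-Cantelli argument built on Lemma~\ref{lem:blocked-marks}. The only difference is in the concentration step. You put $\1_{\{\tau\le t\}}$ into the function class and normalize by $n$ directly, which gives one summable bound on $\{A^n(T)\le Cn\}$. The paper instead uses half-planes with partial sums up to $J^n(t)$, takes a union over $m\in[\delta n,Mn]$, and handles small $m$ with the bound $m/n\le\delta$.
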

\begin{proof}
Let $\cm^n$ be the counting measure of the marked point process $(T_i,S_i)$ of the $n$-th system. Then the fraction of occupied servers of the loss model can be written in the integral form:
\begin{align}\label{eq:rep-N}
\bar{N}_t^n=\int_0^t\int_{\R}\frac{1}{n} \1_{\{x>t-u\}} \1_{\{\bar{N}_{u-}^n<1\}}d\cm^n(du,dx)&=\rho_t^n-\int_0^t\int_{\R} \frac{1}{n} \1_{\{x>t-u\}} \1_{\{\bar{N}_{u-}^n=1\}}d\cm^n(du,dx)\nonumber\\
&=\rho_t^n-\int_0^t \1_{\{S_{A^n(u)}>t-u\}}d\bar{\eta}^n(u).
\end{align}
where $\bar{\eta}^n(t)=\frac{1}{n}\sum_{i=1}^{A^n(t)}\1_{\{\bar{N}^n_{T_i-}=1\}}$. For simplicity we denote $K^n(t,u):=\1_{\{S_{A^n(u)}>t-u\}}$. Then,
\begin{align}\label{eq:rep-N-2}
	\bar{N}^n_t=\rho_t^n-\int_0^t K^n(t,u)d\bar{\eta}^n(u)&=\rho_t^n-\int_0^t\left(K^n(t,u)-K(t-u)\right)d\bar{\eta}^n(u)-\int_0^t K(t-u)d\bar{\eta}^n(u)\nonumber\\
	&=\tilde{\rho}^n_t-\int_0^t K(t-u)d\bar{\eta}^n(u),
\end{align}
where $\tilde{\rho}^n_t=\rho_t^n-\int_0^t\left(K^n(t,u)-K(t-u)\right)d\bar{\eta}^n(u)$. The pair $(\bar N^n,\bar\eta^n)$ solves the Volterra-Skorokhod problem with kernel $K$ for $\tilde\rho^n$. Indeed, Eq.~\eqref{eq:rep-N-2} is Condition~1 of Definition~\ref{def:VSP} with regulator $\bar\eta^n$, while Condition~2 holds since $\bar N^n\leq1$. Condition~3 holds since $\bar\eta^n$ is nondecreasing with $\bar\eta^n(0)=0$, since $A^n(0)=0$. For the complementarity Condition~4, $\bar\eta^n$ increases only at arrival epochs $T_i$ with $\bar N^n_{T_i-}=1$. At such an epoch, Assumption~\ref{asm:queue0}(iv) rules out a simultaneous departure, that is, $\bar N^n_{T_i}=\bar N^n_{T_i-}=1$ and therefore $\int_0^\infty\1_{\{\bar N^n_t<1\}}\,d\bar\eta^n(t)=0$. 
Since the system starts empty and $A^n(0)=0$, we have $\rho^n(0)=\tilde{\rho}^n(0)=0$.
By Proposition~\ref{prop:lip} we have
\begin{align*}
	\|\bar{N}^n-\Gamma_K(\rho^n)\|_T=\|\Gamma_K(\tilde{\rho}^n)-\Gamma_K(\rho^n)\|_T&\leq L_{K,T}\|\tilde{\rho}^n-\rho^n\|_T\\
	&=L_{K,T}\sup_{t \in[0, T]}\left|\int_0^t\left(K^n(t,u)-K(t-u)\right)d\bar{\eta}^n(u)\right|.
\end{align*}
Denote $E^n(t)=\int_0^t\left(K^n(t,u)-K(t-u)\right)d\bar{\eta}^n(u)$. We will show that $\|E^n\|_T\rightarrow0$ almost surely as $n\rightarrow\infty$. Using the representation in \eqref{eq:rep-N} we have
\begin{align}\label{eq:En}
	E^n(t)&=\frac{1}{n}\sum_{i=1}^{A^n(t)}\1_{\{\bar{N}_{T_i-}^n=1\}}\left( \1_{\{S_i>t-T_i\}} -(1-G(t-T_i))\right)\nonumber\\
	&=\frac{1}{n}\sum_{i=1}^{J^n(t)}\left( \1_{\{\sigma_i^n>t-\tau_i^n\}} -(1-G(t-\tau_i^n))\right),
\end{align}
where $J^n(t)=\sum_{i=1}^{A^n(t)}\1_{\{\bar{N}_{T_i-}^n=1\}}$ is the number of customers blocked by time $t$, and $\tau_j^n$, $\si_j^n$ are as in Lemma~\ref{lem:blocked-marks}. For any time $t \in [0, T]$, let $H_t \subset \mathbb{R}_+ \times \mathbb{R}_+$ be the open half-plane defined by:
$$H_t = \{(u, x): u + x > t\}.$$
The family of sets $\mathcal{H}_{T} = \{H_t: t \in [0, T]\}$ consists of half-planes bounded by diagonal lines of slope $-1$. Consequently, $\mathcal{H}_T$ is a Vapnik-Chervonenkis (VC) class (see for example \cite[Section 2.6]{vanderVaart1996}). By Lemma~\ref{lem:blocked-marks}, conditionally on $A^n$ and $B^n$, the marks $(\sigma_j^n)$ are i.i.d. with distribution $G$, and each $\tau^n_j=T_{\io^n_j}$ is a function of $A^n$ and $B^n$. Setting $P^n_i(\cdot)=\delta_{\tau_i^n}\otimes G$, the indicator $\1_{\{\sigma_i^n>t-\tau_i^n\}}=\1_{\{(\tau_i^n,\sigma_i^n)\in H_t\}}$ has conditional mean $P^n_i((\tau_i^n,\sigma_i^n)\in H_t)=1-G(t-\tau_i^n)=K(t-\tau_i^n)$, which is the centering in \eqref{eq:En}. Define
$$\Delta_m^n(\mathcal{H}_T) := \sup_{H \in \mathcal{H}_T} \left| \frac{1}{m} \sum_{i=1}^m \left( \1_{\{(\tau_i^n, \sigma_i^n) \in H\}} - P^n_i((\tau_i^n, \sigma_i^n) \in H) \right) \right|,$$
and set $\Delta_m^n(\mathcal H_T)=0 \text{ if }m=0\text{ or }m>J^n(T).$ By the symmetrization and combinatorial arguments of \cite[Section II.3]{Pollard1984}, which use only the independence of the summands, 
we have, for some positive constants $C,C^{\prime}$ and $c$
\begin{equation*}
	\bp\left(\Delta_m^n(\mathcal{H}_T) >\varepsilon \mid A^n, B^n \right)\leq CS_{\mathcal{H}_T}(m)\exp\left(-c\varepsilon^2m\right)\leq C^{\prime}m\exp\left(-c\varepsilon^2m\right),
\end{equation*}
where $S_{\ch_T}(m):=\max_{z_1,\ldots,z_m}\#\{H\cap\{z_1,\ldots,z_m\}:H\in\ch_T\}$ is the shatter coefficient of $\ch_T$, and where the last inequality follows from Sauer's lemma since $\ch_T$ is a VC class. Consequently, for any fixed $\delta>0$ and $ M<\infty$ 
\begin{equation*}
			\bp\left(\max _{\delta n \leq m \leq M n} \Delta_m^n\left(\mathcal{H}_T\right)>\varepsilon \mid A^n, B^n \right)  \leq \sum_{m=\lceil\delta n\rceil}^{\lfloor M n\rfloor} C^{\prime} m e^{-c \varepsilon^2 m} \leq C^{\prime} \sum_{m=\lceil\delta n\rceil}^{\infty} m e^{-c \varepsilon^2 m} \leq C_{\varepsilon, \delta} n e^{-c \varepsilon^2 \delta n / 2},
\end{equation*}
for large enough $n$. The right-hand side is summable in $n$. Taking expectations and applying the Borel-Cantelli lemma, we obtain, for any fixed $\delta>0$ and $ M<\infty$ 
\begin{equation}\label{eq:Hto0}
	\max _{\delta n \leq m \leq M n}\Delta_m^n(\mathcal{H}_T)\rightarrow 0, \text{ a.s.}
\end{equation}
By \eqref{eq:flln-A},
with the deterministic constant $M=\Lambda(T)+1$, there exists a random $N_1<\infty$ such that $A^n(T)\leq nM$ for all $n>N_1$. Fix $t\in[0,T]$. Since $H_t\in\mathcal{H}_T$, \eqref{eq:En} and the centering above give
\begin{equation}\label{eq:En-pt}
	|E^n(t)|=\frac{J^n(t)}{n}\left|\frac{1}{J^n(t)}\sum_{i=1}^{J^n(t)}\left(\1_{\{(\tau_i^n,\sigma_i^n)\in H_t\}}-P^n_i((\tau_i^n,\sigma_i^n)\in H_t)\right)\right|\leq\frac{J^n(t)}{n}\Delta^n_{J^n(t)}(\mathcal{H}_T),
\end{equation}
with the empty sum ($J^n(t)=0$) read as zero. Since $J^n(t)\in\{0,1,\dots,J^n(T)\}$, taking the supremum in \eqref{eq:En-pt} and using $J^n(T)\leq A^n(T)\leq nM$ for $n>N_1$,
\begin{equation}\label{eq:Enbd}
	\|E^n\|_T\leq \max_{1\leq m\leq J^n(T)}\frac{m}{n}\Delta_{m}^n(\mathcal{H}_T)\leq \max_{1\leq m\leq nM}\frac{m}{n}\Delta_{m}^n(\mathcal{H}_T).
\end{equation}
For any $\ep>0$ we choose $\delta<\ep$. By \eqref{eq:Hto0} we know that there exists large enough $N_2$ such that for $n>N_2,\max _{\delta n \leq m \leq M n} \Delta_{m}^n(\mathcal{H}_T)<\frac{\ep-\delta}{M}$. Let $N=\max(N_1,N_2)$. For all $n > N$, we have by \eqref{eq:Enbd}
\begin{equation*}
	\|E^n\|_T\leq \max_{1\leq m\leq \delta n}\frac{m}{n}\Delta_{m}^n(\mathcal{H}_T)+\max_{\delta n\leq m\leq nM}\frac{m}{n}\Delta_{m}^n(\mathcal{H}_T)< \frac{\delta n}{n}+M \frac{\ep-\delta}{M}=\ep.
\end{equation*}
Hence $\|E^n\|_T\rightarrow 0$ a.s., and consequently $\|\bar{N}^n-\Gamma_K(\rho^n)\|_T\rightarrow 0$ a.s. as well.

By \eqref{eq:ggn} and Corollary~\ref{cor:cmt} we have $\|\Gamma_K(\rho^n)-\Gamma_K(\rho)\|_T\rightarrow 0$. Therefore, by triangle inequality we have $\|\bar{N}^n-\Gamma_K(\rho)\|_T\rightarrow 0$ a.s.
\end{proof}

\begin{remark}\label{rk:approx}
By Theorem~\ref{thm:fluid}, the fluid limit of the $G_t/G/n/n$ queue is the Volterra-Skorokhod reflection of the fluid limit of the $G_t/G/\infty$ queue. The identity
\begin{equation*}
	\rho(t)=\Gamma_K(\rho)(t)+\int_0^t(1-G(t-u))\,d\eta(u)
\end{equation*}
splits the infinite-server queue into the loss queue and a queue of blocked customers with arrival process $\eta$. In the pre-limit, $\Gamma_K(\rho^n)$ differs from $\bar N^n$ because the regulation kernel $\1_{\{S_{A^n(u)}>t-u\}}$ in \eqref{eq:rep-N} depends on the service marks and is not a function of $t-u$ alone. Replacing it by $K(t-u)$, as in \eqref{eq:rep-N-2}, replaces the departures of blocked customers by their mean flow. The resulting error $E^n$ vanishes as $n\to\infty$.
\end{remark}

Recall $\bar{\eta}^n(t)=\frac{1}{n}\sum_{i=1}^{A^n(t)}\1_{\{\bar{N}^n_{T_i-}=1\}}$, the scaled number of blocked customers. The following result shows that the regulator of the limiting process is the fluid limit of $\bar{\eta}$ and inherits the order properties of Section~\ref{sec:vsp}.

\begin{corollary}\label{cor:loss-order}
Suppose that Assumption~\ref{asm:queue0} holds. Fix $T>0$, let $K=1-G$, and let $\eta$ be the regulator for the input $\rho$. Then $\|\bar\eta^n-\eta\|_T\to0$ a.s. Moreover, let $\Lambda_1,\Lambda_2$ be continuous nondecreasing functions with $\Lambda_i(0)=0$ and $d\Lambda_1\leq d\Lambda_2$. If $\rho_i:=K*d\Lambda_i$ as in \eqref{eq:ggn}, and $\eta_i$ is the regulator for the input $\rho_i$, then $\eta_1\leq\eta_2$.
\end{corollary}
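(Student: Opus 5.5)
For the first claim I will use Corollary~\ref{cor:eta-resolvent} together with the proof of Theorem~\ref{thm:fluid}. In that proof, $(\bar N^n,\bar\eta^n)$ was shown to solve the Volterra-Skorokhod problem with kernel $K$ for the input $\tilde\rho^n=\rho^n-E^n$. By uniqueness (Theorem~\ref{thm:V-S-sol}), $\bar\eta^n$ is therefore the regulator associated with $\tilde\rho^n$. Proposition~\ref{prop:reflected-process}(i) says the map from input to regulator is Lipschitz in $\|\cdot\|_T$. Explicitly, Corollary~\ref{cor:eta-resolvent} gives $\eta=(\rho-\hat\rho)+r*(\rho-\hat\rho)$. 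Convolution by the locally finite measure $r$ is bounded on $\cd([0,T])$ with constant $|r|([0,T])$, and $\Gamma_K$ is Lipschitz by Proposition~\ref{prop:lip}. Combining these,
\[
\|\bar\eta^n-\eta\|_T\leq (1+|r|([0,T]))(1+L_{K,T})\,\|\tilde\rho^n-\rho\|_T
\leq C\big(\|E^n\|_T+\|\rho^n-\rho\|_T\big).
\]
Both terms on the right tend to $0$ almost surely: the first by the proof of Theorem~\ref{thm:fluid}, the second by \eqref{eq:ggn}. This proves $\|\bar\eta^n-\eta\|_T\to0$ a.s.

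For the ordering claim, the plan is to reduce to Corollary~\ref{thm:eta-mono}. That requires two things: $K=1-G$ must be nonincreasing, and $\rho_1\leq\rho_2$ must hold pointwise. The first holds because $G$ is a distribution function, and $K$ is c\`adl\`ag, of bounded variation, nonnegative, and satisfies $K(0)=1-G(0)=1$. For the second, write
\[
\rho_2(t)-\rho_1(t)=\int_0^t K(t-u)\,d(\Lambda_2-\Lambda_1)(u).
\]
Since $d\Lambda_1\leq d\Lambda_2$, the measure $d(\Lambda_2-\Lambda_1)$ is nonnegative, and $K\geq0$, so this difference is nonnegative. Also $\rho_i(0)=0\leq1$. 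Corollary~\ref{thm:eta-mono} then gives $\eta_1\leq\eta_2$.

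The main point needing care is the first step: identifying $\bar\eta^n$ exactly as the regulator of $\tilde\rho^n$. This is where uniqueness in Theorem~\ref{thm:V-S-sol} is used, and it requires the complementarity condition already verified in the proof of Theorem~\ref{thm:fluid} under Assumption~\ref{asm:queue0}(iv). Once that identification is in hand, everything else is a direct application of the Lipschitz and monotonicity results.
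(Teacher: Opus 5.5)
Your proposal is correct and follows the paper's proof essentially step for step. You identify $\bar\eta^n$ as the regulator of $\tilde\rho^n$ via the proof of Theorem~\ref{thm:fluid} and uniqueness, then apply the Lipschitz continuity of $\rho\mapsto\eta$ (Proposition~\ref{prop:reflected-process}(i), which you spell out through Corollary~\ref{cor:eta-resolvent} with the constant $(1+|r|([0,T]))(1+L_{K,T})$) together with $\|\tilde\rho^n-\rho\|_T\le\|E^n\|_T+\|\rho^n-\rho\|_T$. For the ordering you invoke Corollary~\ref{thm:eta-mono} after checking that $K=1-G$ is nonincreasing with $K(0)=1$ and that $\rho_2-\rho_1=K*d(\Lambda_2-\Lambda_1)\ge0$, exactly as the paper does.
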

\begin{proof}
By the proof of Theorem~\ref{thm:fluid} and \eqref{eq:ggn}, $\bar\eta^n$ is the regulator for the input $\tilde\rho^n$, and $\|\tilde\rho^n-\rho\|_T\leq\|E^n\|_T+\|\rho^n-\rho\|_T\to0$ a.s. The first claim follows from Proposition~\ref{prop:reflected-process}(i). For the second, $K=1-G\geq0$ gives $\rho_2-\rho_1=K*d(\Lambda_2-\Lambda_1)\geq0$. Since $G(0)=0$, $K$ is c\`adl\`ag, nonincreasing, of bounded variation, with $K(0)=1$, and Corollary~\ref{thm:eta-mono} gives $\eta_1\leq\eta_2$.
\end{proof}

A larger arrival rate thus produces a larger cumulative fluid loss. The fluid occupancy need not increase in the same way, as Example~\ref{prop:non-mono} shows for a loss system with unit service times.

\subsection{Thinning representation of the fluid limit}

The input of Theorem~\ref{thm:fluid} has the form $\rho=K*d\Lambda$, in which arrivals enter through the reflection kernel $K=1-G$. For such inputs the regulator is a thinning of the arrivals, with $w(t)$ the fraction of fluid arrivals admitted at time $t$. 
Theorem~\ref{prop:thinning} below proves this for arrival measures with atoms. Under Assumption~\ref{asm:queue0}, $\Lambda$ is continuous, and we include atoms because the result holds in this generality. Corollary~\ref{thm:rho-sol} treats the special case $d\Lambda=\lambda\,dt$, in which the existence of $w$ comes from Theorem~\ref{thm:V-S-sol}.
In \cite[Theorems 3.1 and 3.2]{wang2025fluid}, the same equation for $w$ characterizes the fluid limit of the $M_t/G/n/n$ queue, with $K=1-G$ and $F$ the fluid occupancy of the customers present at time $0$, and existence of $w$ is obtained from the queueing limit.

\begin{theorem}\label{prop:thinning}
Let $K: [0,\infty)\rightarrow \R_+$ be absolutely continuous with $K(0)=1$ and $k:=K'\leq0$. Let
\beq\label{eq:rho-form-thinning}
	\rho=F+K*dA,\quad \text{where} \quad (K*dA)(t)=\int_0^tK(t-u)\,dA(u).
\eeq
Here $A\in\cd([0,\infty),\R)$ is nondecreasing with $A(0)=0$, $F\in\cd([0,\infty),\R)$ is nonincreasing with $F(0)\leq1$, and neither $A$ nor $F$ has a singular-continuous part. Then the regulator is a thinning of the arrivals, that is, there is a measurable $w:[0,\infty)\to[0,1]$ with
\begin{equation}\label{eq:thinning}
	d\eta=(1-w)\,dA,\qquad \hat\rho=F+\int_0^\cdot K(\cdot-u)\,w(u)\,dA(u),
\end{equation}
where $w=1$ $dA$-a.e.\, on $\{\hat{\rho}<1\}$ and, on $\{\hat\rho=1\}$, the balance $\int_0^tK(t-u)\,w(u)\,dA(u)=1-F(t)$ holds. On the absolutely continuous part, we have $w\lambda=-F'-k*(w dA)$ a.e.\, on $\{\hat{\rho}=1\}$, where $\lambda$ is the density of the absolutely continuous part of $dA$. At an atom $t$ of $A$,
\begin{equation}\label{eq:atom:eta+w}
\Delta\eta(t)=\bigl(\hat\rho(t^-)+\Delta A(t)+\Delta F(t)-1\bigr)^+,\qquad w(t)=1-\frac{\Delta\eta(t)}{\Delta A(t)}\in[0,1].
\end{equation}
\end{theorem}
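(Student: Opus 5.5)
The plan is to show that $d\eta\ll dA$ with Radon--Nikodym derivative in $[0,1]$, and then to set $w:=1-d\eta/dA$. Once that is done, \eqref{eq:thinning} follows by substituting $d\eta=(1-w)\,dA$ into Condition~1 of Definition~\ref{def:VSP}:
\[
\hat\rho=F+K*dA-K*((1-w)dA)=F+K*(w\,dA).
\]
We take $(\hat\rho,\eta)=(\Gamma_K(\rho),\eta)$ from Theorem~\ref{thm:V-S-sol}.

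\emph{Step 1: atoms.} At a time $t$, Condition~1 with $K(0)=1$ and $K$ continuous gives
\[
\Delta\hat\rho(t)=\Delta F(t)+\Delta A(t)-\Delta\eta(t),
\]
since $K*dA$ and $K*d\eta$ jump by $K(0)$ times the atom. If $\Delta\eta(t)>0$, Condition~4 forces $\hat\rho(t)=1$, so $\Delta\eta(t)=\hat\rho(t^-)+\Delta A(t)+\Delta F(t)-1$. If $\Delta\eta(t)=0$, then Condition~2 gives $\hat\rho(t^-)+\Delta A+\Delta F\leq1$. Together these yield the formula \eqref{eq:atom:eta+w} for $\Delta\eta(t)$. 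Because $\hat\rho(t^-)\leq1$ and $\Delta F\leq0$, we get $\Delta\eta(t)\leq\Delta A(t)$; in particular $\eta$ has no atoms off the atoms of $A$. This gives $w(t)\in[0,1]$ at atoms of $A$.

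\emph{Step 2: the diffuse part.} Here I would exploit the minimality of Theorem~\ref{prop:eta-min}, which applies since $K$ is nonincreasing. The goal is to compare $\eta$ with $A$ by showing $\eta\leq A$ and, more strongly, $d\eta\leq dA$ as measures, via a localized minimality. The cleanest route is to show that for every $s<t$ one has
\[
\eta(t)-\eta(s)\leq A(t)-A(s).
\]
To prove this, I would consider the candidate $\xi:=\eta$ on $[0,s]$ and $\xi:=\eta(s)+\min$-type modifications afterwards. A better choice is to use the shifted problem: restarted at time $s$, the pair solves a Volterra--Skorokhod problem with input
\[
\rho(\cdot)-\int_0^s K(\cdot-u)\,d\eta(u)\quad\text{on }[s,\infty).
\]
Its $A$-part after $s$ is $K*dA|_{(s,\cdot]}$, and the remaining part, $F+\int_0^sK(\cdot-u)\,d(A-\eta)(u)$, is nonincreasing once $\eta\leq A$ holds on $[0,s]$, by induction. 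One then checks that $\xi_s(t):=A(t)-A(s)$ lies in $S$ of the restarted problem. Indeed,
\[
\int_s^tK(t-u)\,dA(u)\geq\rho_s(t)-1
\]
reduces to requiring that the nonincreasing remainder be at most $1$, which holds because at time $s$ it equals $\hat\rho(s)\leq1$ and it is nonincreasing afterwards. Minimality then gives $\eta(t)-\eta(s)\leq A(t)-A(s)$. The induction/continuity argument making the "remainder nonincreasing" step rigorous is, I expect, the main obstacle: it needs $d(A-\eta)\geq0$ on $[0,s]$ together with $k\leq0$. I would handle it by a first-failure-time argument, as in the proof of Theorem~\ref{prop:eta-min}.

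\emph{Step 3: identification.} From $0\leq d\eta\leq dA$ we get $w\in[0,1]$, and Condition~4 gives $w=1$ $dA$-a.e.\ on $\{\hat\rho<1\}$. On $\{\hat\rho=1\}$ the balance $\int_0^tK(t-u)w(u)\,dA(u)=1-F(t)$ is just \eqref{eq:thinning} evaluated there. For the absolutely continuous part, I would differentiate the balance. The function $t\mapsto K*(w\,dA)$ is of bounded variation, with derivative $w\lambda+k*(w\,dA)$ on the diffuse part, so its a.e.\ derivative matches $-F'$ at Lebesgue density points of the contact set. The absence of singular-continuous parts in $A$ and $F$ ensures that the diffuse part is absolutely continuous, so that this differentiation accounts for all of it, giving $w\lambda=-F'-k*(w\,dA)$ a.e.\ on $\{\hat\rho=1\}$.
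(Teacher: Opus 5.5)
Your Steps~1 and~3 agree with the paper's treatment of atoms and its identification of $w$. Step~2, however, has a genuine gap, and it carries the real content of the theorem, namely $d\eta\le dA$ away from the atoms of $A$.

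The restart at $s$ with candidate $A-A(s)$ is admissible exactly when $R_s(t):=F(t)+\int_{(0,s]}K(t-u)\,d(A-\eta)(u)\le1$ for $t\ge s$. You can verify this only if $d(A-\eta)\ge0$ on $(0,s]$, and a first-failure argument does not propagate that property. Let $T^*$ be the first time the measure inequality fails. You do get $d(A-\eta)\ge0$ on $(0,T^*]$, with Step~1 handling an atom at $T^*$. The restart at $T^*$ then gives $\eta(t)-\eta(T^*)\le A(t)-A(T^*)$ for all $t\ge T^*$.

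That bound controls only increments starting at $T^*$. To contradict the definition of $T^*$ you need $\eta(t)-\eta(s)\le A(t)-A(s)$ for all $T^*<s<t\le T^*+h$. Restarting at such an $s$ requires $R_s\le1$ again. So does the natural candidate in the problem restarted at $T^*$, namely $\eta$ on $(T^*,s]$ followed by $A$. Either way you need the sign of $d(A-\eta)$ on $(T^*,s]$, which is what you are trying to prove.

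A quantitative version does not rescue this. Absorb the growth of $R_s$ on the window with a small extra push, and let $\alpha=1-K(h)$ and $\nu$ be the negative part of $d(A-\eta)$ on $(T^*,T^*+h]$. The best this gives is a single-interval bound $(d\eta-dA)((s,t])\le\frac{\alpha}{1-\alpha}\,\nu((T^*,s])$. Because the positive and negative parts of $d(A-\eta)$ can interlace, bounds on single intervals do not control the mass of $\nu$, so nothing contracts. This is unlike Theorem~\ref{prop:eta-min}, where the unknown is the single function $\zeta=\xi-\eta$ and the memory term feeds back through one infimum.

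The missing idea is to pass to densities, where the two parts separate pointwise. The paper differentiates Condition~1 of Definition~\ref{def:VSP} at contact points. All but countably many points of $\{\hat\rho=1\}$ are accumulation points of that set, and $\hat\rho'=0$ there wherever $\hat\rho$ is differentiable. This gives $\eta'-\lambda=F'+k*d(A-\eta)$ a.e. on $\{\hat\rho=1\}$, which is the identity of your Step~3 obtained \emph{before} knowing $d\eta\le dA$.

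Set $\gamma:=(\eta'-\lambda)^+$. The nonnegative atoms from Step~1 and the absence of singular-continuous parts give $d(A-\eta)\ge-\gamma\,dt$. Hence $\gamma\le|k|*\gamma$ a.e., with $\gamma=0$ off the contact set by Condition~4. Then $Y=\int_0^\cdot\gamma$ satisfies $Y\le|\mu_K|*Y$, and Lemma~\ref{lem:w-norm-bd} forces $Y=0$.

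Alternatively, your restart idea does work when $A$ is purely atomic with locally finitely many atoms. Between consecutive atoms the restarted input is the nonincreasing remainder, which is $\le1$, so $\eta$ stays constant by uniqueness, and Step~1 handles each atom. You can approximate a general $A$ by such $A^\epsilon$: keep the jumps larger than $\delta$ and discretize the rest on an $\epsilon$-grid, so that $\|A^\epsilon-A\|_T\to0$ and $\|F+K*dA^\epsilon-\rho\|_T\to0$. The Lipschitz continuity of $\rho\mapsto\eta$ in Proposition~\ref{prop:reflected-process}(i) then lets you pass $d(A^\epsilon-\eta^\epsilon)\ge0$ to the limit. Some argument of this kind has to replace the first-failure sketch.
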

\begin{proof}
By Theorem~\ref{thm:V-S-sol} the solution $(\hat\rho,\eta)$ exists and is unique. By Condition~4 of Definition~\ref{def:VSP}, the measure $d\eta$ is specified by its form on the contact set $\{\hat\rho=1\}$. It suffices to show $d\eta\ll dA$, together with the formulas for atoms and for the absolutely continuous part. Then $w = 1-d\eta/dA$ takes values in $[0,1]$,
\eqref{eq:thinning} follows from \eqref{eq:VS-reg} and \eqref{eq:rho-form-thinning}, and $w=1$ $dA$-a.e.\ on $\{\hat\rho<1\}$.

\noindent
\emph{Atoms.} As $K$ is continuous, $\Delta(K*d\eta)(t)=K(0)\Delta\eta(t)=\Delta\eta(t)$, since we assume $K(0)=1$. Therefore from \eqref{eq:VS-reg} we have
\begin{equation}\label{eq:Delta-hat-rho}
	\hat\rho(t)-\hat\rho(t^-)=\Delta\rho(t)-\Delta\eta(t)=\Delta F(t)+\Delta A(t)-\Delta\eta(t),
\end{equation}
where we have used the form of $\rho$ given by \eqref{eq:rho-form-thinning}.  

If $\Delta A(t)=0$, then since $\Delta F(t)\leq0$ and $\hat\rho(t^-)\leq1$, any $\Delta\eta(t)>0$ gives $\hat\rho(t)<1$, contradicting the complementarity property in Definition~\ref{def:VSP}. Hence $\Delta A(t)=0$ implies $\Delta \eta(t)=0$. 

If $\Delta \eta(t)>0$, by the complementarity property in Definition~\ref{def:VSP}, we must have $\hat\rho(t)=1$. Since $\hat\rho(t^-)+\Delta F(t)\leq1$, in order to ensure $\hat\rho(t)=1$, relation \eqref{eq:Delta-hat-rho} implies that we must have $\Delta\eta(t)\leq\Delta A(t)$. 

Both cases provide the form of $\Delta \eta$ in \eqref{eq:atom:eta+w}  and $0 \leq \Delta \eta(t) \leq \Delta A(t)$ gives $w(t) \in [0,1]$ .

\noindent
\emph{Continuous part.} By \eqref{eq:ibp} with $\mu_K(ds)=k(s)\,ds$ and $K(0)=1$,
\begin{equation*}
    d(K*d\eta)=d\eta+(k*d\eta)\,dt,\qquad d\rho=dF+dA+(k*dA)\,dt.
\end{equation*}
Let $\eta'$ be the density of the absolutely continuous part of $d\eta$. By \eqref{eq:VS-reg}, the absolutely continuous part of $d\hat\rho$ has density $F'+\lambda-\eta'+k*d(A-\eta)$, which equals the derivative $\hat\rho'$ a.e.\ \cite[Theorem 3.22]{folland1999real}. All but countably many points of $\{\hat\rho=1\}$ are limit points of this set, and $\hat\rho'=0$ at each such point where $\hat\rho$ is differentiable. Hence
\begin{equation}\label{eq:thin-ac}
    \eta'-\lambda=F'+k*d(A-\eta)\qquad\text{a.e. on }\{\hat\rho=1\}.
\end{equation}
Since $k\leq0$, the memory term is nonpositive if $d(A-\eta)\geq0$. 

We now prove $\eta'\leq\lambda$ a.e. by a Gronwall argument. Let $\ga:=(\eta'-\lambda)^+$. By assumption, neither $d\eta$ nor $dA$ has a singular-continuous part. Moreover the atoms of $d(A-\eta)$ are nonnegative by the previous step. Hence 
$$
d(A-\eta)\geq (\lambda - \eta')dt \geq -\ga\,dt.
$$ 
Since $F'\leq0$ and $k\leq0$, \eqref{eq:thin-ac} gives 
$$
\eta' - \lambda \leq k * d(A - \eta) \leq |k|*\ga \quad \text{a.e. on } \{\hat\rho=1\}.
$$ 
Hence $\ga\leq|k|*\ga$ a.e. on $\{\hat\rho=1\}$.
Outside of this set, $\eta'=0$ a.e. by Condition~4 of Definition~\ref{def:VSP}, and hence $\ga=0$. 
Thus $\ga\leq|k|*\ga$ a.e.
Set $Y(t):=\int_0^t \ga(s)\,ds\leq\eta(t)$. Integrating $\ga\leq|k|*\ga$ and applying Fubini's theorem gives $Y \leq|\mu_K|*Y$, where $|\mu_K|(ds)=|k(s)|\,ds$. As $Y$ is continuous and $|\mu_K|(\{0\})=0$, Lemma~\ref{lem:w-norm-bd} gives 
$$
\|Y\|_{\beta,T}\leq R_{\mu_K}(\beta)\|Y\|_{\beta,T}, \qquad T>0,
$$ 
with $R_{\mu_K}(\beta)<1$ for large $\beta$. Hence $Y=0$ on $[0,T]$ for every $T>0$, that is, $\eta'\leq\lambda$ a.e. With the atoms, this gives $d\eta\leq dA$ and $\eta'=(1-w)\lambda$ a.e. Since $d(A-\eta)=w\,dA$, \eqref{eq:thin-ac} becomes $w\lambda=-F'-k*(w\,dA)$ a.e.\ on $\{\hat\rho=1\}$.
\end{proof}

\begin{corollary}\label{thm:rho-sol}
Suppose $\rho(t)=F(t)+\int_0^t K(t-u)\lambda(u)du$, where $F$ is  absolutely continuous, nonincreasing, with $F(0) \leq 1$,  and $\lambda$ is a nonnegative locally integrable function. Let $K$ be as in Theorem~\ref{prop:thinning}. Then there is a measurable $w:[0,\infty)\to[0,1]$ such that the solution to the Volterra-Skorokhod problem with kernel $K$ for $\rho$ is
\begin{align*}
	&\hat{\rho}(t)=F(t)+\int_0^t w(u)K(t-u)\lambda(u)du\\
	&d\eta(u)=(1-w(u))\lambda(u)du,
\end{align*}
where $w$ satisfies for a.e. $t \geq 0$
\begin{equation*}
	w(t)\lambda(t)=\left\{\begin{array}{ll}
		\lambda(t), & \hat{\rho}(t)<1,\\
		-F^{\prime}(t)-\int_0^t K^{\prime}(t-u)w(u)\lambda(u)du, \, &\hat{\rho}(t)=1,
	\end{array}
	\right. \label{eq:wt}
\end{equation*}
and
\begin{equation*}
	\1_{\left\{\hat{\rho}(t)<1\right\}} \leq w(t) \leq 1 \quad \text { a.e. in }[0, \infty) .
\end{equation*}
\end{corollary}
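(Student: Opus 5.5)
This is a direct specialization of Theorem~\ref{prop:thinning}, so the plan is to check its hypotheses and then read off each conclusion. Set $A(t):=\int_0^t\lambda(u)\,du$. Then $A$ is nondecreasing, continuous, absolutely continuous, and $A(0)=0$. In particular $A$ has no atoms and no singular-continuous part, and $dA=\lambda\,dt$. The function $F$ is absolutely continuous and nonincreasing with $F(0)\leq1$, so it too has neither atoms nor a singular-continuous part. The kernel $K$ is taken as in Theorem~\ref{prop:thinning}. Moreover $\rho=F+K*dA$, which is exactly the form \eqref{eq:rho-form-thinning}. Existence and uniqueness of $(\hat\rho,\eta)$ then come from Theorem~\ref{thm:V-S-sol}, and Theorem~\ref{prop:thinning} supplies a measurable $w:[0,\infty)\to[0,1]$ satisfying \eqref{eq:thinning}.

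Substituting $dA=\lambda\,du$ into \eqref{eq:thinning} gives $d\eta=(1-w)\lambda\,du$ and $\hat\rho(t)=F(t)+\int_0^tw(u)K(t-u)\lambda(u)\,du$. Since $A$ has no atoms, \eqref{eq:atom:eta+w} is vacuous, so every statement concerns the absolutely continuous part.

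For the equation satisfied by $w$, I treat the two phases separately. On $\{\hat\rho<1\}$, Theorem~\ref{prop:thinning} gives $w=1$ $dA$-a.e. Hence $w\lambda=\lambda$ Lebesgue-a.e. there, because the set where $w\neq1$ has $\lambda=0$ a.e. On $\{\hat\rho=1\}$, the theorem gives $w\lambda=-F'-k*(w\,dA)$ a.e. With $k=K'$ and $w\,dA=w\lambda\,du$, this is $-F'(t)-\int_0^tK'(t-u)w(u)\lambda(u)\,du$.

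The sandwich $\1_{\{\hat\rho<1\}}\leq w\leq1$ needs one adjustment. The theorem only gives $w=1$ $dA$-a.e. on $\{\hat\rho<1\}$, whereas the claim is a.e. in Lebesgue measure, and $w$ is only determined $dA$-a.e. I therefore redefine $w:=1$ on $\{\hat\rho<1\}\cap\{\lambda=0\}$, and more generally on the whole $dA$-null part of $\{\hat\rho<1\}$. This leaves $w\,dA$, and hence $\eta$ and $\hat\rho$, unchanged, keeps $w$ measurable (the set $\{\hat\rho<1\}$ is Borel since $\hat\rho$ is càdlàg), and yields $w=1$ Lebesgue-a.e. on $\{\hat\rho<1\}$. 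The bound $w\leq1$, and $w\geq0$ on the contact set, are already part of Theorem~\ref{prop:thinning}.

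This bookkeeping about $dA$-a.e. versus Lebesgue-a.e. is the only step requiring care. The displayed equation for $w\lambda$ is insensitive to the redefinition, since both sides involve $w$ only through $w\lambda$. Everything else is direct substitution.
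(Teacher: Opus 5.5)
Your proposal is correct and follows the paper's proof. The paper likewise takes $dA=\lambda\,dt$ in Theorem~\ref{prop:thinning} and handles the $dA$-a.e.\ versus Lebesgue-a.e.\ issue by redefining $w$ on a $dA$-null set. The only difference is that it sets $w:=1$ on all of $\{\lambda=0\}$ rather than only on the relevant part of $\{\hat\rho<1\}$; either choice leaves $w\,dA$ unchanged and yields the stated sandwich.
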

\begin{proof}
Take $dA=\lambda\,dt$ in Theorem~\ref{prop:thinning}, and set $w:=1$ on $\{\lambda=0\}$.
\end{proof}

\begin{remark}
The thinning representation is special to the input entering $\rho$ through the reflection kernel. Suppose instead $\rho=F+\tilde K*\lambda$ is offered through a kernel $\tilde K\neq K$, while the reflection still acts through $K$. A thinning would require both $\hat\rho=\rho-K*\bigl((1-w)\lambda\bigr)$ and $\hat\rho=F+\tilde K*(w\lambda)=\rho-\tilde K*\bigl((1-w)\lambda\bigr)$. This implies
\begin{equation*}
	(K-\tilde K)*\bigl((1-w)\lambda\bigr)=0.
\end{equation*}
Let $\al:=\inf\{t:w(t)<1\}$, the left endpoint of the support of the measure $(1-w)\la\, dt$, be the first blocking time. If blocking occurs, $(1-w)\lambda\not\equiv0$, and for every $T > \al$ Titchmarsh's convolution theorem \cite{titchmarsh1926} forces $K=\tilde K$ a.e.\, on $[0,T-\al]$. Since $T$ is arbitrary, $\tilde K=K$ a.e. on $[0,\infty)$. Thus no thinning of the input reproduces the loss unless $\tilde{K}=K$. A different kernel leaves only the free-boundary description: $d\eta=0$ on $\{\hat\rho<1\}$ and $\int_0^tK(t-u)\,d\eta(u)=\rho(t)-1$ on $\{\hat\rho=1\}$, with no closed form for the contact set. The queueing pre-limit of Section~\ref{sec:queue-model} is of this type.
\end{remark}

\section{Numerical experiments}\label{sec:numeric}
\subsection{Numerical solution}
Solving the Volterra-Skorokhod problem numerically relies on the fixed point structure in Theorem~\ref{thm:V-S-sol}. Since $\Gamma_K$ is nonanticipating (Proposition~\ref{prop:reflected-process}), the fixed point can be computed forward in time. For given $\rho$ and absolutely continuous $K$, $r(ds)=q(s)\,ds$ by Remark~\ref{rem:ac-resolvent}, and $q$ can be computed from $q=-k-q*k$.
Denote $\hat{\rho}=\Gamma_K(\rho)$ and $\al=r*\rho+\rho$. Recall that
\begin{equation}\label{eq:num-hat-rho}
    \hat{\rho}=\Gamma(\al-r*\hat{\rho}).
\end{equation}
We use the explicit formula of the ordinary Skorokhod map on $(-\infty,1]$:
\begin{equation}\label{eq:num-Gamma}
    \Gamma(y)(t)=y(t)-\sup_{0\le s\le t}(y(s)-1)^+=y(t)-\eta(t).
\end{equation}
On a uniform grid $t_k=kh$, we write $\hat{\rho}_k:= \hat{\rho}(t_k)$ and $\al_k:= \al(t_k)$. Using the discretization
$$
(r*\hat{\rho})(t_k)\approx h\sum_{j=1}^k q(t_k-t_j)\hat{\rho}_j=h\sum_{j=1}^{k-1} q(t_k-t_j)\hat{\rho}_j+h q(0)\hat{\rho}_k ,
$$
we see that at step $k$, only the current value $\hat{\rho}_k$ is unknown. Let $y_k=\al_k-(r*\hat{\rho})(t_k)$ and denote $\eta_k$ the Skorokhod regulator. $\hat{\rho}_k$ can be determined from the ordinary Skorokhod map by \eqref{eq:num-Gamma}:
\begin{equation}\label{eq:num-yk-etak}
    \hat{\rho}_k=y_k-\eta_k,
\end{equation}
where $\eta_k=\max\{\eta_{k-1},\,y_k-1,\,0\}$. Set $b_k:=\al_k-h\sum_{j=1}^{k-1} q(t_k-t_j)\hat{\rho}_j,\, c:=h q(0)$. We have
\begin{equation}\label{eq:num-yk}
    y_k=b_k-c\hat{\rho}_k . 
\end{equation}
Hence each time step reduces to a scalar update: 
\begin{enumerate}
\item if $b_k-c>1+\eta_{k-1}$, then
$$y_{k}-1>\eta_{k-1}.$$
Then the regulator increases, and $\hat{\rho}_k=1, \eta_k=y_k-1$; 
\item otherwise
$\eta_k=\eta_{k-1}$ and we solve from \eqref{eq:num-yk-etak} and \eqref{eq:num-yk} that
$$
    \hat{\rho}_k=\frac{b_k-\eta_{k-1}}{1+c}.
$$
\end{enumerate}
Hence the fixed point \eqref{eq:num-hat-rho} can be constructed sequentially over the grid, without a global Picard iteration. 

\subsection{A queueing example} In this section, we illustrate our results through elementary numerical experiments. Consider a queueing system where the arrival process $A^n$ is a nonhomogeneous Poisson process with rate $n\lambda(t)$, where $\lambda(t)=\frac{2}{3}\bigl(1+\sin(\tfrac{2\pi t}{10})\bigr)$ and let the service times follow the distribution $G$, where $G$ is Lognormal with parameters $(\mu, \si)=(-0.5,2)$. By Theorem~\ref{thm:fluid}, $\Gamma_K(\rho)$ with $K=1-G$ is the fluid limit of this $M_t/G/n/n$ loss queue, where $\rho$ is the fluid limit \eqref{eq:ggn} of the $M_t/G/\infty$ queue. Figure~\ref{fig:fluid} presents $\rho$ and $\Gamma_K(\rho)$, computed from the fixed point problem of Theorem~\ref{thm:V-S-sol} and from the Volterra integral equation (VIE) of Corollary~\ref{thm:rho-sol}. The two computations agree.
\begin{figure}[h]
	\centering
	\includegraphics[width=0.6\linewidth]{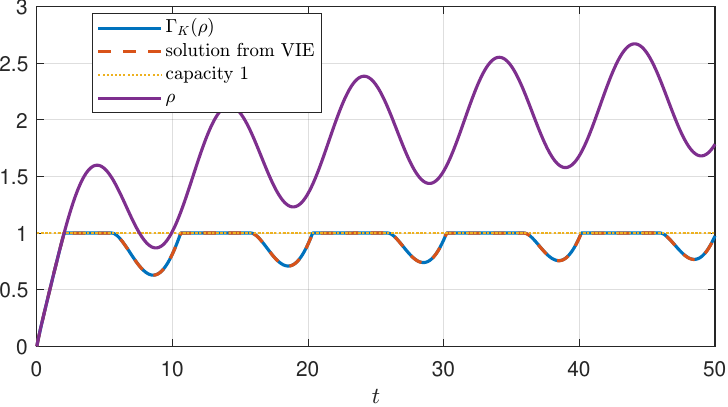}
	\caption{Fluid limit computed from the fixed point problem and from the VIE}
	\label{fig:fluid}
\end{figure}

We next investigate the evolution of the pre-limit systems. Figure~\ref{fig:Compare} shows simulated $M_t/G/\infty$ and the $M_t/G/n/n$ systems for $n=10$, $50$ and $250$ with the same arrival process and service distribution above. In Figures~\ref{fig:Compare_10}, \ref{fig:Compare_50} and \ref{fig:Compare_250} we compare the corresponding scaled number in the system $\rho^n$ and $\bar{N}^n$ with the reflection $\Gamma_K(\rho^n)$. The simulated paths $\rho^n$ and $\bar{N}^n$ are piecewise constant. The reflection $\Gamma_K(\rho^n)$ is piecewise smooth, because the departures of blocked customers are replaced by their mean flow as  discussed in Remark~\ref{rk:approx}. In Figures~\ref{fig:Dif_10}, \ref{fig:Dif_50} and \ref{fig:Dif_250} we present the difference between $\bar{N}^n$ and $\Gamma_K(\rho^n)$. As $n$ grows, this difference shrinks and both processes approach the fluid limit in Figure~\ref{fig:fluid}. This supports the observation in Theorem~\ref{thm:fluid} and Remark~\ref{rk:approx}.
\renewcommand{\thesubfigure}{\alph{subfigure}}
\begin{figure}[h]
	\centering
	\begin{subfigure}{0.45\textwidth}
		\centering
		\includegraphics[width=\linewidth]{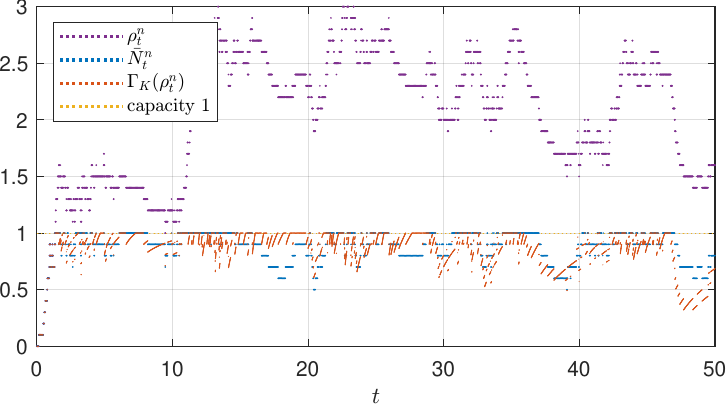}
		\caption{(a) $n=10$: $\rho^n$, $\bar N^n$, $\Gamma_K(\rho^n)$}
		\label{fig:Compare_10}
	\end{subfigure}
	\begin{subfigure}{0.45\textwidth}
		\centering
		\includegraphics[width=\linewidth]{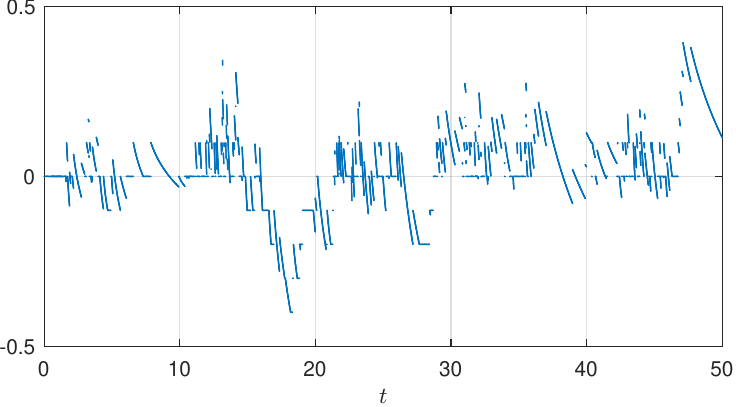}
		\caption{(b) $n=10$: $\bar N^n-\Gamma_K(\rho^n)$}
		\label{fig:Dif_10}
	\end{subfigure}
	\begin{subfigure}{0.45\textwidth}
		\centering
		\includegraphics[width=\linewidth]{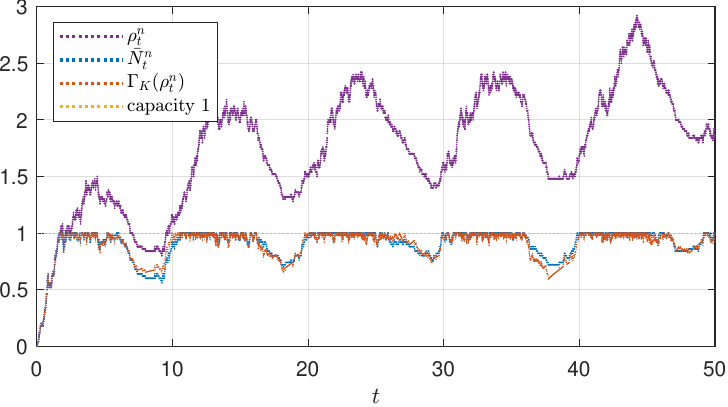}
		\caption{(c) $n=50$: $\rho^n$, $\bar N^n$, $\Gamma_K(\rho^n)$}
		\label{fig:Compare_50}
	\end{subfigure}
	\begin{subfigure}{0.45\textwidth}
		\centering
		\includegraphics[width=\linewidth]{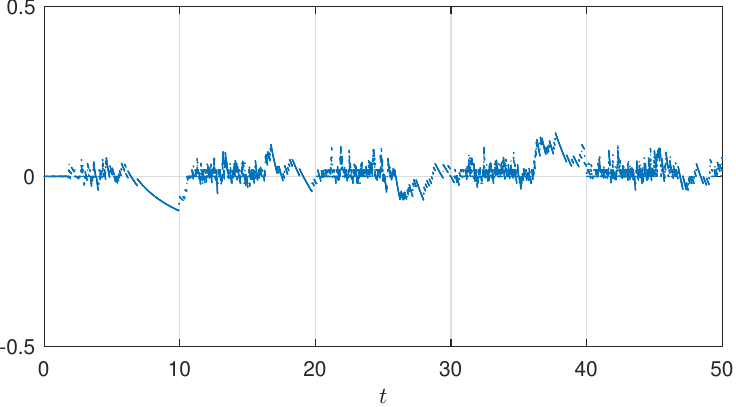}
		\caption{(d) $n=50$: $\bar N^n-\Gamma_K(\rho^n)$}
		\label{fig:Dif_50}
	\end{subfigure}
		\begin{subfigure}{0.45\textwidth}
		\centering
		\includegraphics[width=\linewidth]{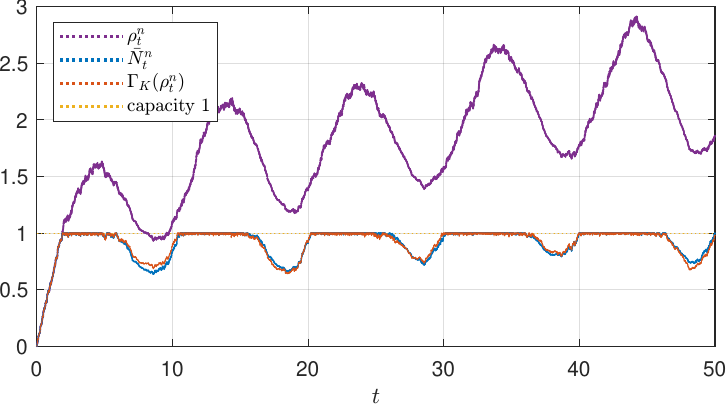}
		\caption{(e) $n=250$: $\rho^n$, $\bar N^n$, $\Gamma_K(\rho^n)$}
		\label{fig:Compare_250}
	\end{subfigure}
	\begin{subfigure}{0.45\textwidth}
		\centering
		\includegraphics[width=\linewidth]{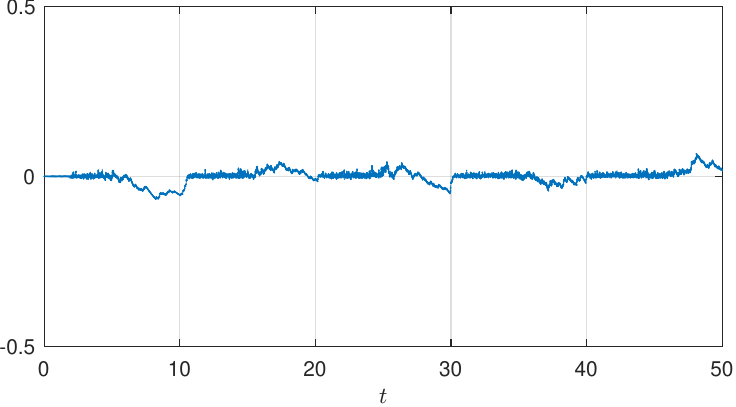}
		\caption{(f) $n=250$: $\bar N^n-\Gamma_K(\rho^n)$}
		\label{fig:Dif_250}
	\end{subfigure}
	\caption{Comparison between $M_t/G/\infty$, $M_t/G/n/n$ and reflection of $M_t/G/\infty$}
	\label{fig:Compare}
\end{figure}

In Figure~\ref{fig:Compare_Gt} we present similar experiments for the $G_t/G/\infty$ and $G_t/G/n/n$ systems, with arrivals generated by a time-changed renewal process. Specifically, let $R$ be a renewal counting process whose interarrival times are independent and identically distributed according to an Erlang-2 distribution with rate $2$, having mean $1$ and variance $1/2$. We define the arrival process by
$$
A^n(t)=R\left(n\Lambda(t)\right),\quad\Lambda(t)=\int_0^t\lambda(s)ds,
$$
with $\lambda$ as above. This yields non-Poisson arrivals with $\bar{A}^n \to \Lambda$ as in Assumption~\ref{asm:queue0}, where $\Lambda$ is the same cumulative rate of the nonhomogeneous Poisson example. We retain the same service distribution, and use identical arrival times and service requirements for both the infinite-server and loss systems. Figures~\ref{fig:Compare_Gt_Gamma2_n10}, \ref{fig:Compare_Gt_Gamma2_n50} and \ref{fig:Compare_Gt_Gamma2_n250} compare the scaled occupancies $\rho^n$ and $\bar{N}^n$ with the reflection $\Gamma_K(\rho^n)$, and Figures~\ref{fig:Dif_Gt_Gamma2_n10}, \ref{fig:Dif_Gt_Gamma2_n50} and \ref{fig:Dif_Gt_Gamma2_n250} show the difference $\bar{N}^n-\Gamma_K(\rho^n)$. As in Figure~\ref{fig:Compare}, the difference shrinks as $n$ grows.
\begin{figure}[h]
	\centering
	\begin{subfigure}{0.45\textwidth}
		\centering
		\includegraphics[width=\linewidth]{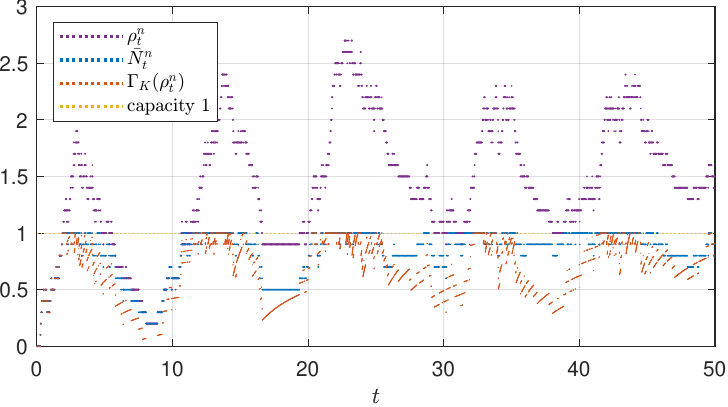}
		\caption{(a) $n=10$: $\rho^n$, $\bar N^n$, $\Gamma_K(\rho^n)$}
		\label{fig:Compare_Gt_Gamma2_n10}
	\end{subfigure}
	\begin{subfigure}{0.45\textwidth}
		\centering
		\includegraphics[width=\linewidth]{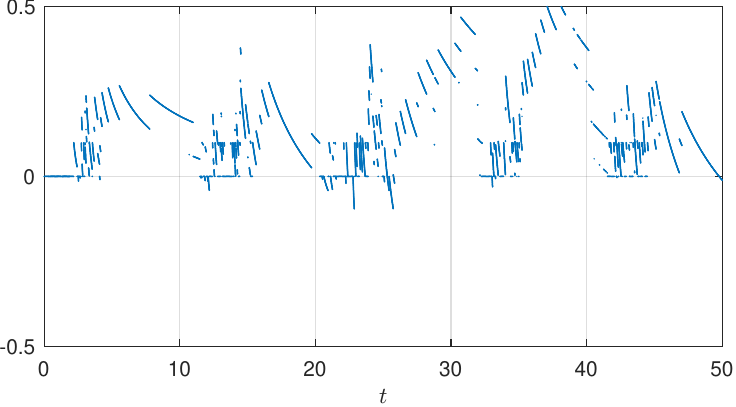}
		\caption{(b) $n=10$: $\bar N^n-\Gamma_K(\rho^n)$}
		\label{fig:Dif_Gt_Gamma2_n10}
	\end{subfigure}
	\begin{subfigure}{0.45\textwidth}
		\centering
		\includegraphics[width=\linewidth]{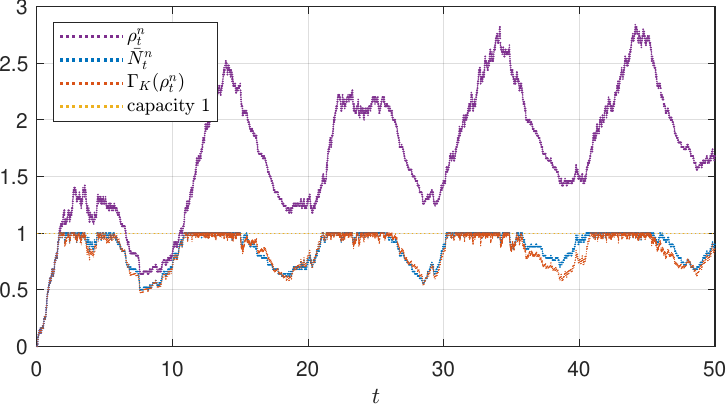}
		\caption{(c) $n=50$: $\rho^n$, $\bar N^n$, $\Gamma_K(\rho^n)$}
		\label{fig:Compare_Gt_Gamma2_n50}
	\end{subfigure}
	\begin{subfigure}{0.45\textwidth}
		\centering
		\includegraphics[width=\linewidth]{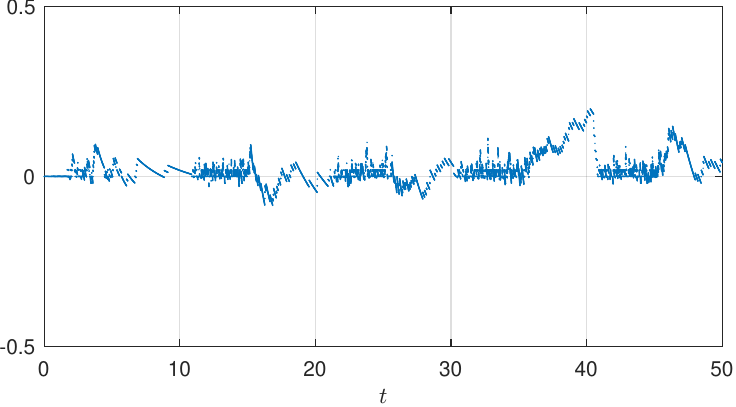}
		\caption{(d) $n=50$: $\bar N^n-\Gamma_K(\rho^n)$}
		\label{fig:Dif_Gt_Gamma2_n50}
	\end{subfigure}
		\begin{subfigure}{0.45\textwidth}
		\centering
		\includegraphics[width=\linewidth]{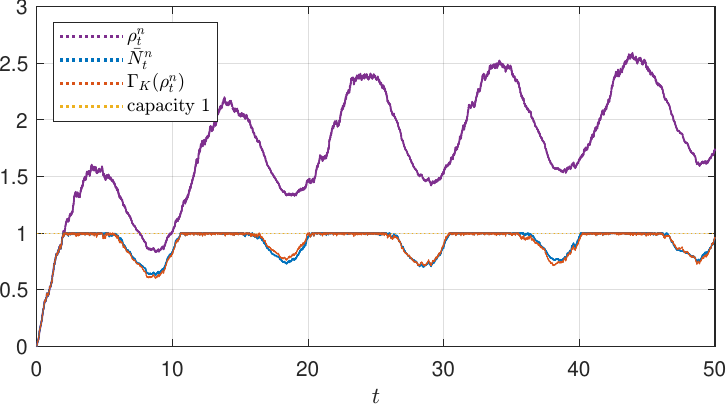}
		\caption{(e) $n=250$: $\rho^n$, $\bar N^n$, $\Gamma_K(\rho^n)$}
		\label{fig:Compare_Gt_Gamma2_n250}
	\end{subfigure}
	\begin{subfigure}{0.45\textwidth}
		\centering
		\includegraphics[width=\linewidth]{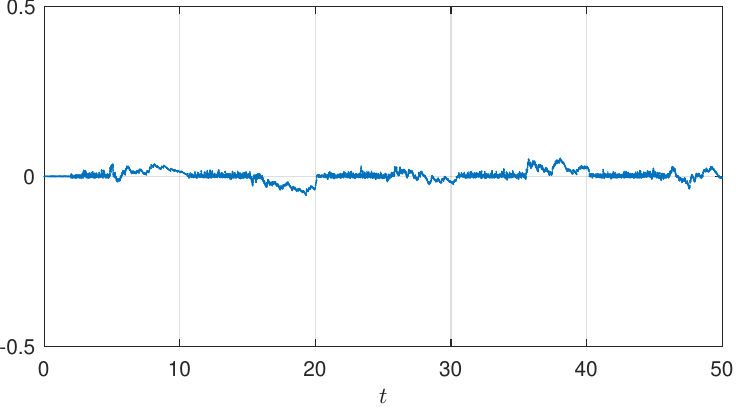}
		\caption{(f) $n=250$: $\bar N^n-\Gamma_K(\rho^n)$}
		\label{fig:Dif_Gt_Gamma2_n250}
	\end{subfigure}
	\caption{Comparison between $G_t/G/\infty$, $G_t/G/n/n$ and reflection of $G_t/G/\infty$}
	\label{fig:Compare_Gt}
\end{figure}

\section{Conclusion}\label{sec:conclusion}

We showed that the fluid limit of the $G_t/G/n/n$ loss queue is the Volterra-Skorokhod reflection $\Gamma_K(\rho)$ of the fluid limit $\rho$ of the infinite-server queue, with $K=1-G$ the service survival function, for any arrival process satisfying a functional strong law of large numbers (Theorem~\ref{thm:fluid}). The limiting regulator is the cumulative fluid loss. It is a thinning of the arrivals (Theorem~\ref{prop:thinning}, Corollary~\ref{thm:rho-sol}) and increases with the arrival rate (Corollary~\ref{cor:loss-order}). The experiments of Section~\ref{sec:numeric} illustrate both the fixed-point/VIE agreement and the pre-limit approximation $\Gamma_K(\rho^n)\approx\bar N^n$.

These results rest on the Volterra-Skorokhod reflection problem, in which the regulator is convolved with the memory kernel $K$ rather than added to the state, and on its well-posedness. For every c\`adl\`ag input and every nonnegative c\`adl\`ag kernel $K$ of bounded variation with $K(0)=1$ the problem has a unique solution (Theorem~\ref{thm:V-S-sol}), obtained by reducing the convolved problem to an ordinary Skorokhod map through the resolvent of $-\mu_K$ and a Bielecki fixed point. The map $\Gamma_K$ is Lipschitz in the uniform norm (Proposition~\ref{prop:lip}) and nonanticipating (Proposition~\ref{prop:reflected-process}). It therefore acts pathwise on stochastic inputs and turns $\hat\rho=\Gamma_K(\rho)$ into an adapted reflected stochastic process.

The convolution changes the qualitative behavior of the reflection. The occupancy $\hat\rho$ is not monotone in the input (Example~\ref{prop:non-mono}), unlike the classical map with $K\equiv1$. Yet, for nonincreasing $K$, the cumulative regulator stays monotone (Corollary~\ref{thm:eta-mono}) because it is the minimal admissible regulator (Theorem~\ref{prop:eta-min}). This passes to a stochastic order and an almost-sure coupling of the loss (Corollary~\ref{cor:eta-st}). If the input enters through a kernel other than the reflection kernel, the thinning representation fails, leaving only the free-boundary description on the contact set $\{\hat\rho=1\}$.

\bigskip

\bibliographystyle{plain}

\bibliography{ref}

\end{document}